\newtheorem{definition}{Definition}
\newtheorem{remark}{Remark}
\newtheorem{theorem}{Theorem}
\newtheorem{lemma}{Lemma}
\newtheorem{proposition}{Proposition}
\newtheorem{notation}[theorem]{Notation}
\definecolor{violet}{rgb}{0.7,0,0.6}
\definecolor{OliveGreen}{RGB}{85,107,47}
\title{Parameter estimation for the FOU(p) process with
the same lambda}
\author{Juan Kalemkerian\\
Universidad de la República, Facultad de Ciencias.}
\begin{document}
\maketitle
\begin{abstract}
\noindent The FOU$(p)$ processes can be considered as an
alternative to ARMA (or ARFIMA) processes to model time series. Also, 
there is no substantial loss when we model a time series using FOU$(p)$ processes
with the same $\lambda$, than using differents  $\lambda$'s. 
In this work we propose a new method to estimate the unique value of $\lambda$ in a FOU$(p)$ process.
Under certain conditions, we will prove consistency and asymptotic normality.
We will show that this new method is more easy and fast  to compute. 
 By simulations, we show that 
the new procedure  work well and is more efficient than the general method. Also, we include an application to real 
data, 
and we show that the new method work well too and outperforms the family of ARMA$(p,q)$.

\end{abstract}

\noindent \textbf{Keywords: } fractional Brownian motion, fractional Ornstein-Uhlenbeck process, time series.
AMS: 62M10

\newpage

\section{Introduction}\label{introduction}
In \cite{chichi} the FOU$(p)$ processes was introduced. The FOU$(p)$ processes are a continuous time 
centered and stationary Gaussian process, and is  obtained
from the iteration of the  $T_\lambda$ functional $p$ times (in
Section \ref{preliminaries} can be viewed the explicit definition of this functional) applied to a fractional Brownian motion 
(fBm), where $\lambda$
is a positive number. The FOU$(p)$ processes contains several parameters, for one side $\sigma$ and $H$ are 
the parameters of the fBm (scale and Hurst parameters respectively) and for  other side
$\lambda_1, \lambda_2,...,\lambda_q$ are
parameters from the application of the functional $T_{\lambda}$ $p$ times, in such way that
 applying the $T_{\lambda_i}$ functional $p_i$ times for $i=1,2,...,q$,  where $p_1+p_2+...+p_q=p$ 
 ($p$ is the total number of iterations).
More explicitely, the authors proposed to use the notation
 FOU$ \left( \lambda _{1}^{\left(
		p_{1}\right) },\lambda _{2}^{\left( p_{2}\right) },...,\lambda _{q}^{\left(
		p_{q}\right) },\sigma ,H\right).$ 
		In the particular case in which $q=1$ we have a FOU$(\lambda^{(p)},\sigma,H)$ process.
The FOU$(p)$ process has several interesting theoretical properties. First, the fractional Ornstein--Uhlenbeck 
process
defined by \cite{Cheridito} is a particular case of FOU$(p)$ by taking $p=1$. Second, when $H>1/2$
 any FOU$(p)$
has short range dependence for $p \geq 2$ and long range dependence for $p=1$. Thirth, as $p$ grows,   
the autocorrelation function of the process goes more quickly to zero.
Fourth, 
any FOU$(p)$ has an explicit and simple formula for the spectral density. 
Fiveth, using a result given in \cite{Ibragimov}, the parameter $H$ is the local
Hölder index of the process.
Sixth,
if we have observed the process
in an equispaced sample of $[0,T]$ where $T,n \rightarrow + \infty$, under certain conditions 
between $T$ and $n$, it is possible to 
estimate consistently all the parameters of any FOU$(p)$. 
The parameters $H$ and $\sigma$ can be estimated by a procedure proposed in \cite{Istas}, and
$\hat{H}$ and $\hat{\sigma}$ have an explict formula from the observed data. As a second step, from the formula
for the spectral density, it is possible to estimate the $\lambda_i$ parameters by 
 using a modified Whittle contrast. To obtain
these estimators it is necessary to optimize a function that not have an explicit 
formula, thus the optimum must be found by a numerical optimization procedure.
The proof of the above mentioned properties can be found in \cite{chichi} and
\cite{kalemkefou}. Also, the family of FOU$(p)$ can be used to model a wide range of time series, including
short memory and long memory time series. In \cite{kalemkefou} can be found three examples of 
real data modeled by FOU$(p)$ and your comparison with ARMA, or ARFIMA models, and can be seen 
the good performance (including measures proposed in \cite{Wilmott}) of this new class of models. 
In this work, in Section \ref{application to real data}, 
we will add a fourth example of real data
modeled better than the family of ARMA models.
In Section \ref{preliminaries} we give a basis  to understand the definition of the FOU$(p)$ processes.
In Section \ref{parameter estimation} we describe the procedure of the parameter estimation of any FOU$(p)$ model
proposed in \cite{chichi} and \cite{kalemkefou} (subsections 3.1 and 3.2) and we propose in Subsection 3.3
the main theoretical result of this work, that is to estimate from an explicit formula the parameter
$\lambda$ when we have a FOU$(\lambda^{(p)},\sigma,H)$ process, and to prove the consistency and asymptotic 
normality of $\hat{\lambda}$.
In Section \ref{a comparison} we corroborate the theoretical results (consistency and asymptotic normality) by simulations 
and we show a comparison between the formula to estimate $\lambda$ given in Subsection 3.3 with the proposed
in \cite{chichi} and we show an improvement in terms of slightly diminution of the variance of the estimator.
In Section \ref{application to real data} we show an application to  real data set of the proposed estimation procedure to fit 
a FOU$(p)$ model and we show
an improving results than the ARMA family.
Our conclusions are given in Section \ref{conclusions} and the proof of the theoretical results
proposed in Subsection 3.3 are given in Section \ref{proofs}.

\section{Preliminaries}\label{preliminaries}
We start recalling the definition of a fractional Brownian motion.

\begin{definition}
	A fractional Brownian motion with Hurst parameter $H\in \left( 0,1\right] $,
	is an almost surely continuous centered Gaussian process $\left\{
	B_{H}(t)\right\} _{t\in \mathbb{R}}$ with 
	\begin{equation*}
	\mathbb{E}\left( B_{H}(t)B_{H}(s)\right) =\frac{1}{2}\left( \left\vert
	t\right\vert ^{2H}+\left\vert s\right\vert ^{2H}-\left\vert t-s\right\vert
	^{2H}\right) ,\text{ \ }t,s\in \mathbb{R}.
	\end{equation*}
\end{definition}

We follow with the definition of the iterated Ornstein-Uhlenbeck processes
of order $p$ defined in \cite{chichi} (FOU$(p)$). 

\begin{definition}\label{definition_FOU}
	Suppose that $\left\{ \sigma B_{H}(s)\right\} _{s\in \mathbb{R}}$ is a fractional
	Brownian motion with Hurst parameter $H,$ and scale parameter $\sigma$.
	Suppose further that $\lambda _{1},\lambda
	_{2},...,\lambda _{q}$ are distinct positive numbers and that $p_{1},p_{2},...,p_{q}\in \mathbb{N}$ are such that $%
	p_{1}+p_{2}+...+p_{q}=p$. We define the iterated Ornstein-Uhlenbeck process
	of order $p$ as $\left\{ X_{t}\right\} _{t\in \mathbb{R}%
	} $ by 
	\begin{equation*}
	X_{t}:= T_{\lambda _{1}}^{p_{1}} \circ T_{\lambda _{2}}^{p_{2}} \circ ....
	\circ T_{\lambda _{q}}^{p_{q}} (\sigma
	B_{H})(t)=\sum_{i=1}^{q}K_{i}\left( \lambda \right)
	\sum_{j=0}^{p_{i}-1}\binom{p_{i}-1}{j} T_{\lambda _{i}}^{\left(
		j\right) }(\sigma B_{H})(t),
	\end{equation*}%
	where the numbers $K_{i}\left( \lambda \right) $  are defined by \begin{equation}
K_{i}\left( \lambda \right) =K_{i}\left( \lambda _{1},\lambda
_{2},...,\lambda _{q}\right): =\frac{1}{\prod\limits_{j\neq i}\left(
	1-\lambda _{j}/\lambda _{i}\right) }  \label{k_i}
\end{equation} and the operators $%
	T_{\lambda _{i}}^{\left( j\right) }$  follows the next formula 
	\begin{equation}
T_{\lambda }^{(h)}(y)(t):=\int_{-\infty }^{t}e^{-\lambda (t-s)}\frac{\left(
	-\lambda \left( t-s\right) \right) ^{h}}{h!}dy(s) \ \ \text {for}  \ \ h=0,1,2,... \label{hh}
\end{equation}
When $h=0$ we simply call $T_\lambda$, thus \begin{equation}
T_{\lambda }(y)(t):=\int_{-\infty }^{t}e^{-\lambda (t-s)} dy(s). \label{h=0}
\end{equation}
\label{fou_definition}
\end{definition}

\begin{notation}
	$\left\{ X_{t}\right\} _{t\in \mathbb{R}}\sim  \text{FOU } \left( \lambda _{1}^{\left(
		p_{1}\right) },\lambda _{2}^{\left( p_{2}\right) },...,\lambda _{q}^{\left(
		p_{q}\right) },\sigma ,H\right), $ where $0<\lambda_1<\lambda_2<...<\lambda_q$ or more simply,
	$\left\{ X_{t}\right\} _{t\in \mathbb{R}}\sim$FOU$(p)$.
	
\end{notation}

\begin{remark}
 In \cite{Arratia} can be found the proof of the equality $T_{\lambda _{1}}^{p_{1}} \circ T_{\lambda _{2}}^{p_{2}} \circ ....
	\circ T_{\lambda _{q}}^{p_{q}} (\sigma
	B_{H})(t)=\sum_{i=1}^{q}K_{i}\left( \lambda \right)
	\sum_{j=0}^{p_{i}-1}\binom{p_{i}-1}{j} T_{\lambda _{i}}^{\left(
		j\right) }(\sigma B_{H})(t)$ given in Definition \ref{definition_FOU}.
\end{remark}

\begin{remark}
	When $p=1$, we obtain a fractional Ornstein--Uhlenbeck process (FOU$\left(
	\lambda ,\sigma ,H\right) $).
\end{remark}

Throughout this work we will consider the case $q=1$, this is $\left\{ X_{t}\right\} _{t\in \mathbb{R}}\sim  \text{FOU } 
\left( \lambda ^{\left(
		p\right) },\sigma ,H\right). $
\begin{remark}
As suggested in \cite{kalemkefou}, to model a time series data set from a FOU$(p)$ process,
in several cases may be convenient to standardize the data and then fit  FOU$(p)$ 
where $\sigma=1$. In this way we avoid the estimation of $\sigma$. In this work we will use the notation
 FOU$(\lambda^{(p)}, H)$ for any FOU$(\lambda^{(p)},\sigma, H)$ where $\sigma=1$.
\end{remark}
\begin{remark}
 It is possible to apply the Definition 2, evaluating the functional $T_\lambda$ at another process 
instead $\sigma B_H $. Some general properties of this class of processes can be found in \cite{Arratia}.
\end{remark}

\section{Parameter estimation}\label{parameter estimation}
In \cite{chichi} it is proposed a method to estimate all the parameters of any FOU$(p)$ in a consistent way and
the estimators of $\sigma$ and $H$ have asymptotic Gaussian distribution if the process is observed in a equispaced
sample of $[0,T]$. In addittion, if the process is observed throughout the interval $[0,T]$, the $\lambda's$ parameters
also have asymptotic Gaussian distribution. In \cite{kalemkefou} is proposed a consistent way to estimate the $\lambda's$ 
parameters when the process is observed in an equispaced sample of $[0,T]$.
In the FOU$\left( \lambda ^{\left(
p\right) },\sigma ,H\right)$ case, we propose to estimate $(\sigma,H)$ in the same way as \cite{chichi}, but 
in the case in which $1/2<H<3/4$, we propose a plug-in formula to estimate $\lambda$ and we will show that this
estimator is consistent and has asymptotic Gaussian distribution. Also, we will show by simulations that this estimator
has less variance that the one proposed in \cite{kalemkefou}.

\subsection{Estimation of $H$ and $\sigma$ }

We start defining a filter of length $k+1$ an order $L$.
\begin{definition}
 $a=\left( a_{0},a_{1},...,a_{k}\right) $\ is  a \textit{filter of length $k+1$\ and order $L\geq 1$}\ if and only if the following 
conditions hold:

\begin{itemize}
	\item $\sum_{i=0}^{k}a_{i}i^{l}=0$\ para todo $0\leq l\leq L-1.$
	
	\item $\sum_{i=0}^{k}a_{i}i^{L}\neq 0.$
	
\end{itemize}
\end{definition}

Observe that given $a$ a filter of order $L$\ and length $k+1$, the new filter 
$a^{2}=\left( a_{0},0,a_{1},0,a_{2},0,...0,a_{k}\right) $ has order $L$\ and length $2k+1$. 
Now, we define the quadratic variation of a sample associated to a filter $a$ as follows.
In this work we will use the filters 
\begin{equation}                  
      a_k= \left ( -1, \binom{k}{1}, -\binom{k}{2},...,(-1)^{k-1}\binom{k}{2}, (-1)^{k}\binom{k}{1}, 
      (-1)^{k+1}\right ).
      \end{equation}
It is easy to see that $a_k$ is a filter of order $k$ and length $k+1$.

\begin{definition}
	Given a filter $a$\ of length $k+1$ and a sample $X_{1},X_{2},...,X_{n}$, we define 
	the quadratic variations associated with filter $a$ by
	\begin{equation*}
	V_{n,a}:=\frac{1}{n}\sum_{i=0}^{n-k}\left( \sum_{j=0}^{k}a_{j}X_{i+j}\right)
	^{2}.
	\end{equation*}
\end{definition}

The following theorem define $\left ( \hat{H}, \hat{\sigma} \right )$ and summarizes their asymptotic properties. 

\begin{theorem}[Kalemkerian \& Le\'on]\label{asymptotic of H sigma}\[\]
	If $X_{\Delta },X_{2\Delta },....,X_{i\Delta },...,X_{n\Delta }=X_{T}$ is an equispaced sample of the process $\left\{ X_{t}\right\} _{t\in 
		\mathbb{R}}\sim $FOU$\left(p\right) $ where $H>1/2$, the filter $a$\ is of order $L\geq 2$ and length $k+1$, $\Delta_n=n^{-\alpha}$ for some 
		$\alpha$ such that $0<\alpha<\frac{1}{2(2H-1)}$ and $T=n\Delta_n \rightarrow +\infty$,
	as $n\rightarrow +\infty. $ Define 
	\begin{equation}
\widehat{H}=\frac{1}{2}\log _{2}\left( \frac{V_{n,a^{2}}}{V_{n,a}}\right), 
\label{Hgorro}
\end{equation}
\begin{equation}
\widehat{\sigma }=\left( \frac{-2V_{n,a}}{\Delta _{n}^{2\widehat{H}%
	}\sum_{i=0}^{k}\sum_{j=0}^{k}a_{i}a_{j}\left\vert i-j\right\vert ^{2\widehat{%
			H}}}\right) ^{1/2}. \label{sigmagorro}
\end{equation}
Then

	\begin{enumerate}
		\item 
		\begin{equation*}
		\left( \widehat{H},\widehat{\sigma }\right) \overset{a.s.}{\rightarrow }%
		\left( H,\sigma \right) .
		\end{equation*}
		
		\item 
		\begin{equation*}
		\sqrt{n}\left( \widehat{H}-H\right) \overset{w}{\rightarrow }N\left(
		0,\Gamma _{1}\left( H,\sigma ,a\right) \right)
		\end{equation*}
		
		\item 
		\begin{equation*}
		\frac{\sqrt{n}}{\log n}\left( \widehat{\sigma }-\sigma \right) \overset{w}{%
			\rightarrow }N\left( 0,\Gamma _{2}\left( H,\sigma ,a\right) \right)
		\end{equation*}
	\end{enumerate}
\end{theorem}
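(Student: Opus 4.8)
This is essentially the quadratic--variations result established in \cite{chichi}, so the proof amounts to assembling standard pieces; the plan is as follows. First, since $\{X_t\}$ is a centered \emph{stationary} Gaussian process, one reduces everything to its covariance $r_X(t)=\E(X_0X_t)$, available in closed form from the spectral density computed in \cite{chichi}. The structural fact that matters is the local expansion $r_X(t)=r_X(0)-\tfrac{\sigma^2}{2}|t|^{2H}+g(t)$ near $t=0$, where the singular term $-\tfrac{\sigma^2}{2}|t|^{2H}$ is inherited unchanged from the driving fractional Brownian motion (reflecting that $H$ is the local Hölder index of $X$), while $g$ is even with $g(0)=0$ and $g(t)=\tilde c_2 t^2+c_\ast|t|^{2H+2}+o(|t|^{2H+2})$, the coefficients depending on $(\lambda,\sigma,H,p)$. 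Fix the filter $a=(a_0,\dots,a_k)$ of order $L\ge 2$, let $b=(b_u)_{|u|\le k}$, $b_u:=\sum_{i-j=u}a_ia_j$, be its autocorrelation filter (a filter of order $2L$), and put $Y_i:=\sum_{j=0}^k a_jX_{(i+j)\Delta_n}$: a centered stationary Gaussian sequence with $\E(Y_0Y_m)=\rho_n(m):=\sum_u b_u\,r_X\big((m+u)\Delta_n\big)$, so that $V_{n,a}=\tfrac1n\sum_{i=0}^{n-k}Y_i^2$ is the average of squares of a stationary Gaussian sequence (and likewise $V_{n,a^2}$ with spacing $2\Delta_n$).

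\emph{Consistency.} Using $\sum_u b_u u^\ell=0$ for $0\le\ell\le 2L-1$ to annihilate $r_X(0)$ and the polynomial part of $g$ one obtains $\rho_n(0)=-\tfrac{\sigma^2}{2}\Delta_n^{2H}S_a+O(\Delta_n^{2H+2})$ with $S_a:=\sum_{i,j}a_ia_j|i-j|^{2H}<0$, and similarly $\rho_n^{(2)}(0)=2^{2H}\big(-\tfrac{\sigma^2}{2}\Delta_n^{2H}S_a\big)+O(\Delta_n^{2H+2})$; hence $\E V_{n,a^2}/\E V_{n,a}\to 2^{2H}$. For the fluctuations set $Z_i:=Y_i/\sqrt{\rho_n(0)}$ and $c_n(m):=\rho_n(m)/\rho_n(0)$, so that $\V\big(V_{n,a}/\rho_n(0)\big)=\tfrac{2}{n^2}\sum_{i,i'}c_n(i-i')^2\le\tfrac2n\sum_m c_n(m)^2$. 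One shows $\sup_n\sum_m c_n(m)^2<\infty$ from $|c_n(m)|\lesssim|m|^{2H-2L}$ for $1\le|m|\lesssim\Delta_n^{-1}$ (the filter turns $\rho_n$ into a $2L$-th order difference of $r_X$) plus a bound showing the larger lags contribute $o(1)$ — or compactly via the spectral identity $\sum_m c_n(m)^2=2\pi\int_{-\pi}^{\pi}g_n(\theta)^2\,d\theta$ for the probability density $g_n\propto|\widehat a(\theta)|^2\widetilde f_{\Delta_n}(\theta)$. Thus $\V\big(V_{n,a}/\rho_n(0)\big)=O(1/n)$; Gaussian hypercontractivity gives $\E\big(V_{n,a}/\rho_n(0)-1\big)^{2q}=O(n^{-q})$ for every $q$, and Borel--Cantelli yields $V_{n,a}/\rho_n(0)\to1$ a.s. Combined with the mean asymptotics this gives $V_{n,a^2}/V_{n,a}\to 2^{2H}$ a.s., i.e. $\widehat H\to H$ a.s. For $\widehat\sigma$ one substitutes $\widehat H$ in \eqref{sigmagorro}: since $\widehat H-H=o(1/\log n)$ a.s. one has $\Delta_n^{2\widehat H}/\Delta_n^{2H}=e^{2(\widehat H-H)\log\Delta_n}\to1$ a.s., and continuity of $t\mapsto\sum_{i,j}a_ia_j|i-j|^{2t}$ together with $V_{n,a}\sim-\tfrac{\sigma^2}{2}\Delta_n^{2H}S_a$ gives $\widehat\sigma\to\sigma$ a.s.

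\emph{Asymptotic normality.} Now $Z_i^2-1=H_2(Z_i)$ is a Hermite-rank-$2$ functional of the stationary Gaussian sequence $(Z_i)$ whose normalized correlations converge to $c_\infty(m)=\big(\sum_u b_u|m+u|^{2H}\big)\big/\big(\sum_u b_u|u|^{2H}\big)$, the correlations of a filtered fractional Brownian motion of index $H$ (square-summable because $L\ge2$), and are dominated uniformly in $n$ by a fixed summable sequence. A triangular-array version of the Breuer--Major theorem — equivalently, the fourth-moment/Malliavin--Stein bound for each $n$, which controls the distance to normality through $\sum_m c_n(m)^2$ and $\sum_m|c_n(m)|$ only — then gives $\tfrac{1}{\sqrt n}\sum_i(Z_i^2-1)\overset{w}{\rightarrow}N\big(0,2\sum_{m\in\mathbb{Z}}c_\infty(m)^2\big)$, jointly with the analogue for $a^2$. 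The delta method applied to $\widehat H=\tfrac12\log_2(V_{n,a^2}/V_{n,a})$ yields $\sqrt n(\widehat H-H)\overset{w}{\rightarrow}N(0,\Gamma_1(H,\sigma,a))$, with $\Gamma_1$ the classical Istas--Lang/Coeurjolly expression in $(c_\infty(m))_m$ (only the local $|t|^{2H}$ structure of $X$ survives in the limit). Taking logarithms in \eqref{sigmagorro}, $\log\widehat\sigma=\tfrac12\log(2V_{n,a})-\widehat H\log\Delta_n-\tfrac12\log(-S_{\widehat H})$; the term $-(\log\Delta_n)(\widehat H-H)=\alpha(\log n)(\widehat H-H)$ has exact order $\log n/\sqrt n$ and dominates the other two ($O_p(1/\sqrt n)$), so $\widehat\sigma-\sigma\sim\sigma\alpha(\log n)(\widehat H-H)$ and $\tfrac{\sqrt n}{\log n}(\widehat\sigma-\sigma)\overset{w}{\rightarrow}N(0,\Gamma_2(H,\sigma,a))$, $\Gamma_2$ being (up to the sampling constant $\alpha^2$) equal to $\sigma^2\Gamma_1$.

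The delicate part is not the central-limit machinery but the quantitative control of the \emph{discretization bias}: replacing $r_X$ by its singular part must introduce an error negligible at the scale of the fluctuations, i.e. $\sqrt n\big(\E V_{n,a^2}/\E V_{n,a}-2^{2H}\big)\to0$ and the analogue (with one more $\log n$) for $\widehat\sigma$. Tracking the error terms coming from the smooth part of $r_X$ — in particular the $O(\Delta_n^{2H+2})$ produced by the $|t|^{2H+2}$ term, which is not killed by the filter — and from the aliased spectral density $\widetilde f_{\Delta_n}$ near $\theta=0$ shows that this relative bias is of order $\Delta_n^{2}$; the requirement $\sqrt n\,\Delta_n^{2}\to0$ together with the stated $\alpha<\tfrac1{2(2H-1)}$ (equivalently $n\Delta_n^{2(2H-1)}\to\infty$, which for $1/2<H<3/4$ asks only $\alpha<1$, i.e. the assumed $T=n\Delta_n\to\infty$) delimits the admissible range of $\alpha$. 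A routine but necessary companion point is that $(Z_i)$ is a triangular array rather than a single sequence, so the Breuer--Major step must be run with constants uniform in $n$; this is precisely what the fourth-moment formulation provides.
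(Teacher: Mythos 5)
A preliminary remark: the paper itself contains no proof of this theorem — it is an imported result (attributed to Kalemkerian \& Le\'on) taken from \cite{chichi}, with the discretely-sampled refinements in \cite{kalemkefou}; Section \ref{proofs} proves only Proposition \ref{fouvariance}, Theorem \ref{consistency} and the two auxiliary lemmas. So there is no in-paper proof to compare against. That said, your reconstruction follows exactly the route the cited source relies on (quadratic variations of a filtered sample in the spirit of \cite{Istas}, a Hermite-rank-two / Breuer--Major central limit theorem with uniform-in-$n$ fourth-moment control, the delta method for $\widehat H$, and the observation that the error of $\widehat\sigma$ is dominated by $-(\log\Delta_n)(\widehat H-H)$, which produces the $\sqrt n/\log n$ rate), and at the level of a sketch most of it is sound.

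There is, however, one genuine gap in the argument as you wrote it: the only hypothesis that is specific to this theorem, $0<\alpha<\frac{1}{2(2H-1)}$ (equivalently $n\Delta_n^{4H-2}\to\infty$), is never actually used anywhere in your proof, while the condition you do invoke to make the discretization bias negligible, $\sqrt n\,\Delta_n^{2}\to 0$, i.e. $\alpha>1/4$, is \emph{not} among the hypotheses and cannot be deduced from them (it is a lower bound on $\alpha$, whereas the statement only imposes upper bounds). Your own expansion shows why this matters: the $|t|^{2H+2}$ term of $r_X$ (and the $t^4$ term when $L=2$) is not annihilated by the filter and does not cancel in the ratio $V_{n,a^2}/V_{n,a}$, so the deterministic bias of $\widehat H$ is generically of exact order $\Delta_n^{2}$, and a centered $\sqrt n$-limit needs either $\sqrt n\Delta_n^{2}\to0$ or a finer cancellation argument that you have not supplied. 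Writing that this requirement ``together with the stated $\alpha<\frac{1}{2(2H-1)}$ delimits the admissible range of $\alpha$'' conflates two different inequalities. To close the proof you would have to show where $n\Delta_n^{4H-2}\to\infty$ really enters (most plausibly in controlling the non-local part of the covariance, which is where the long-memory case $p=1$, $H>3/4$ is delicate), and either establish that the bias is $o(n^{-1/2})$ under the stated hypotheses alone or make explicit that an extra condition on $\alpha$ is being assumed, as in the source theorem. The remaining ingredients — hypercontractivity plus Borel--Cantelli for the almost sure statements, the triangular-array fourth-moment bound, and $\Gamma_2$ equal to $\sigma^2\Gamma_1$ up to the sampling constant — are standard and correctly indicated.
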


\begin{remark}
In \cite{kalemkefou} it is showed that in the case in which $H<1/2$, the theorem remains valid  taking $\alpha >1/2$.
\end{remark}

\subsection{Estimation of  $\lambda$}
In \cite{chichi} it is obtained an explicit formula for the spectral density of any FOU$(p)$ and it is proposed
a modiffied Whittle procedure to estimate $\lambda = (\lambda_1, \lambda_2,....,\lambda_q)$ when the process is observed
in the whole interval $[0,T]$ and $H, \sigma$ are known. More explicitely, $\widehat{\lambda 
	}_{T}=\arg \min_{\lambda \in \Lambda }U_{T}\left( \lambda \right) $ where 
	$U_{T}\left( \lambda \right) =\frac{1}{4\pi }\int_{-\infty }^{+\infty }\left(
	\log f^{\left( X\right) }\left( x,\lambda \right) +\frac{I_{T}\left(
		x\right) }{f^{\left( X\right) }\left( x,\lambda \right) }\right) w\left(
	x\right) dx$ being $I_{T}\left( x\right) $ is the
	periodogram of the second order 
	$
	I_{T}\left( x\right) =\frac{1}{2\pi T}\left\vert
	\int_{0}^{T}X_{t}e^{-itx}dt\right\vert ^{2}$, 
	$f^{(X)}(x)=\frac{\sigma^2\Gamma(2H+1)\sin(H\pi)|x|^{2p-1-2H}}{2\pi\prod_{i=1}^q(\lambda^2_
	i+x^2)^{p_i}}$ is the spectral density of the process,
	$\Lambda$ is a compact set and $w$ be certain weight function.
	From this procedure and using a result obtained in \cite{Leo}, in \cite{chichi}
	can be see the proof of the consistency and asymptotic normality of $\widehat{\lambda 
	}_{T}$. In \cite{kalemkefou} it is show that it is possible to take a discretized version of $U_T$ and 
	$I_T$ and using $(\hat{H},\hat{\sigma})$ instead the true value of $(H,\sigma)$, then
	under certain condition
	of the weight function $w$ an the speed in which $T/n \rightarrow 0$, we have that $\widehat{\lambda 
	}_{T}$ is consistent.
	 Of course, this method has the drawback in terms of computational cost, due to the lack of explicit 
	 formula for the function $U_T$ and due to the fact that the algorithms to optimize functions beginning
	 with a initial values, and the final estimation can depend of its.
\subsection{An alternative procedure to estimate $\lambda$}	
In this subsection, we propose to estimate $\lambda$ from an explicit formula where $X_1,X_2,...,X_n$ be an 
equispaced sample of  $%
\left[ 0,T\right] $ of $\left\{ X_{t}\right\} _{t\in \mathbb{R}}\sim $FOU$%
\left( \lambda ^{\left( p\right) },\sigma ,H\right) $ and we will prove consistency and asymptotic
normality where $1/2<H<3/4$. Two advantages has this procedure with respect
to the procedure tu estimate $\lambda$ given in the previous subsection. First is that this procedure is very fast 
to calculate, and second we obtain consistency and asymptotic normality of the estimator.

\noindent  In the following proposition, we will show that when  $\left\{ X_{t}\right\} _{t\in \mathbb{R}}\sim  \text{FOU } 
\left( \lambda ^{\left(
p\right) },\sigma ,H\right) $ the variance of any observation $X_t$ verify a simple and explicit formula.

\begin{proposition}\[\]
 If $\left\{ X_{t}\right\} _{t\in \mathbb{R}}\sim $FOU$\left( \lambda
^{\left( p\right) },\sigma ,H\right) $ then 
\begin{equation}
\mathbb{V}\left( X_{t}\right) =\frac{\sigma ^{2}H\Gamma \left( 2H\right)
\prod_{i=1}^{p-1}\left( i-H\right) }{\left( p-1\right) !\lambda ^{2H}},
\label{V(Xt)}
\end{equation}
where $\prod_{i=1}^{p-1}\left( i-H\right)$ is defined as $1$ when $p=1$.
\label{fouvariance}
\end{proposition}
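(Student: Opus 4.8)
The plan is to obtain $\mathbb{V}(X_t)$ as the total mass of the spectral density, specialize the formula for $f^{(X)}$ recalled in Subsection 3.2 to the case $q=1$, $p_1=p$, and then evaluate the resulting one–dimensional integral as a Beta function. Since $\{X_t\}_{t\in\mathbb{R}}$ is a centered stationary Gaussian process, its variance does not depend on $t$ and equals $\gamma_X(0)=\int_{-\infty}^{\infty}f^{(X)}(x)\,dx$. With $q=1$ and $p_1=p$ the spectral density is $f^{(X)}(x)=\dfrac{\sigma^{2}\Gamma(2H+1)\sin(H\pi)\,|x|^{2p-1-2H}}{2\pi(\lambda^{2}+x^{2})^{p}}$, so
\[
\mathbb{V}(X_t)=\frac{\sigma^{2}\Gamma(2H+1)\sin(H\pi)}{2\pi}\int_{-\infty}^{\infty}\frac{|x|^{2p-1-2H}}{(\lambda^{2}+x^{2})^{p}}\,dx .
\]
A preliminary remark is that this integral is finite: the integrand is even, behaves like $|x|^{2p-1-2H}$ near $0$ (integrable because $2p-1-2H>-1$, as $p\ge 1>H$) and like $|x|^{-1-2H}$ at infinity (integrable because $H>0$).

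Next I would compute the integral by the substitution $u=(x/\lambda)^{2}$ in $2\int_{0}^{\infty}(\cdots)\,dx$. A short calculation gives
\[
\int_{-\infty}^{\infty}\frac{|x|^{2p-1-2H}}{(\lambda^{2}+x^{2})^{p}}\,dx
=\lambda^{-2H}\int_{0}^{\infty}\frac{u^{(p-H)-1}}{(1+u)^{(p-H)+H}}\,du
=\lambda^{-2H}\,B(p-H,H)
=\lambda^{-2H}\,\frac{\Gamma(p-H)\,\Gamma(H)}{\Gamma(p)},
\]
using the standard identity $\int_{0}^{\infty}u^{a-1}(1+u)^{-a-b}\,du=B(a,b)$ with $a=p-H>0$ and $b=H>0$.

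Finally I would simplify the constants. Writing $\Gamma(2H+1)=2H\,\Gamma(2H)$, $\Gamma(p)=(p-1)!$, and $\Gamma(p-H)=\Gamma(1-H)\prod_{i=1}^{p-1}(i-H)$ (the empty product being $1$ when $p=1$, in agreement with the statement), and then invoking Euler's reflection formula $\Gamma(H)\Gamma(1-H)=\pi/\sin(H\pi)$, the factors $\sin(H\pi)$ and $\pi$ cancel and one is left with $\mathbb{V}(X_t)=\dfrac{\sigma^{2}H\,\Gamma(2H)\prod_{i=1}^{p-1}(i-H)}{(p-1)!\,\lambda^{2H}}$, which is exactly \eqref{V(Xt)}. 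There is no real obstacle here: the only points requiring care are the integrability justification above and the bookkeeping with the Gamma identities and the $p=1$ convention. An alternative but considerably more laborious route would bypass the spectral density and instead use the time–domain representation $X_t=\sum_{j=0}^{p-1}\binom{p-1}{j}T_{\lambda}^{(j)}(\sigma B_H)(t)$, computing each covariance $\mathbb{E}\big(T_{\lambda}^{(i)}(\sigma B_H)(t)\,T_{\lambda}^{(j)}(\sigma B_H)(t)\big)$ from \eqref{hh}; the spectral approach is cleaner and is the one I would adopt.
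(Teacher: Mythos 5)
Your proof is correct, but it takes a genuinely different route from the paper. You compute $\mathbb{V}(X_t)=\gamma_X(0)=\int_{-\infty}^{\infty}f^{(X)}(x)\,dx$ using the explicit spectral density recalled in Subsection 3.2, reduce the integral via $u=(x/\lambda)^2$ to the Beta integral $B(p-H,H)$, and finish with $\Gamma(2H+1)=2H\Gamma(2H)$, $\Gamma(p-H)=\Gamma(1-H)\prod_{i=1}^{p-1}(i-H)$ and Euler's reflection formula; the integrability check and the Gamma bookkeeping are all in order, and the constants do cancel to give \eqref{V(Xt)}. The paper instead works entirely in the time domain — exactly the ``more laborious'' alternative you mention at the end: it writes $X_t=\sigma\sum_{i=0}^{p-1}\binom{p-1}{i}T_{\lambda}^{(i)}(B_H)(t)$, applies the Pipiras--Taqqu covariance formula \eqref{pipiras} for Wiener integrals with respect to $B_H$ (which requires $H>1/2$), and then needs two combinatorial lemmas (Lemmas \ref{poly1} and \ref{poly2}) showing that the resulting function $g(H)$ is a polynomial of degree $p-1$ with zeros at $1,2,\dots,p-1$ and $g(0)=1$, hence $g(H)=\prod_{i=1}^{p-1}(i-H)/(p-1)!$. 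Your spectral argument is shorter, avoids the polynomial identification entirely, and is not tied to $H>1/2$ (it works for any $H\in(0,1)$ since $2p-1-2H>-1$); its only cost is that it leans on the spectral density formula imported from the earlier paper of Kalemkerian and Le\'on, whereas the paper's proof is self-contained from the definition of the $T_{\lambda}^{(j)}$ operators and the fBm covariance. Both are valid; yours buys brevity and generality in $H$ at the price of an external input, the paper's buys self-containedness at the price of the two auxiliary lemmas.
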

From this simple formula and knowing estimate $\mathbb{V}(X_t), \sigma$ and $H$,
we can obtain an explicit formula to estimate $\lambda$.

If we call $\widehat{\mu }_{2}=\frac{1}{n}\sum_{i=1}^{n}X_{i}^{2}$, put $%
\widehat{\mu }_{2}$ instead $\mathbb{V}\left( X_{t}\right) $ and changing $%
\left( \widehat{\sigma },\widehat{H}\right) $ instead $\left( \sigma
,H\right) $ we can obtain a natural plug-in estimator of $\lambda $ from 
\begin{equation}
\widehat{\lambda }=\left( \frac{\widehat{\sigma }^{2}\widehat{H}\Gamma
\left( 2\widehat{H}\right) \prod_{i=1}^{p-1}\left( i-\widehat{H}\right) }{%
\left( p-1\right) !\widehat{\mu }_{2}}\right) ^{\frac{1}{2\widehat{H}}}.
\label{lambdahat}
\end{equation}%
Having good asymptotic properties of $\left( \widehat{\sigma },\widehat{H}%
\right) $, from (\ref{lambdahat}), it is natural to find similar properties
for $\widehat{\lambda }.$ In the next theorem, we will show that when $1/2<H<3/4$ and adding 
a hypothesis about the speed in which $T$ goes to infinite, we can obtain consistency 
and asymptotic normality of $\hat{\lambda}$.
 The equality \ref{lambdahat}, is a generalization of the formula proposed in \cite{Brouste} to estimate
 the $\lambda$ parameter in a fractional Ornstein--Uhlenbeck process.
\begin{theorem}\[\]
 If $X_{1},X_{2},...,X_{n}$ be an equispaced sample observed in $%
\left[ 0,T\right] $ of $\left\{ X_{t}\right\} _{t\in \mathbb{R}}\sim $FOU$%
\left( \lambda ^{\left( p\right) },\sigma ,H\right) $, $1/2<H<3/4,$ $n\left( 
\frac{T}{n}\right) ^{k}\rightarrow 0,$ $\frac{T\log ^{2}n}{n}\rightarrow 0$
as $n\rightarrow +\infty $ for some $k>1,$ $T\rightarrow +\infty $, then 
\begin{equation*}
\widehat{\lambda }\overset{a.s.}{\rightarrow }\lambda 
\end{equation*}%
and 
\begin{equation*}
\sqrt{T}\left( \widehat{\lambda }-\lambda \right) \overset{w}{\rightarrow }%
N\left( 0,\alpha \left( \sigma ,H,\lambda \right) \right) .
\end{equation*}
\label{consistency}
\end{theorem}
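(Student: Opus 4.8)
The plan is to view $\widehat{\lambda}$ as a smooth function of three statistics whose behaviour is already understood, and then transfer the conclusions by continuous mapping and the delta method. Write $\widehat{\lambda}=g\!\left(\widehat{\sigma},\widehat{H},\widehat{\mu}_2\right)$ with
\[
 g(s,h,m)=\left(\frac{s^{2}\,h\,\Gamma(2h)\,\prod_{i=1}^{p-1}(i-h)}{(p-1)!\;m}\right)^{\!1/(2h)}.
\]
By Proposition \ref{fouvariance}, $g\!\left(\sigma,H,\V(X_t)\right)=\lambda$, and $g$ is $C^{1}$ (indeed real--analytic) on a neighbourhood of $\left(\sigma,H,\V(X_t)\right)$ because $\sigma>0$, $\V(X_t)>0$ and $1/2<H<3/4$. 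A useful book--keeping observation: by Theorem \ref{asymptotic of H sigma}, $\widehat{H}-H=O_{\pr}\!\left(n^{-1/2}\right)$ and $\widehat{\sigma}-\sigma=O_{\pr}\!\left(n^{-1/2}\log n\right)$, so the hypothesis $T\log^{2}n/n\to 0$ makes both $o_{\pr}\!\left(T^{-1/2}\right)$; hence at the scale $\sqrt{T}$ the randomness of $\widehat{\lambda}$ must come entirely from $\widehat{\mu}_2$.

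The core of the argument is the asymptotics of $\widehat{\mu}_2=\frac1n\sum_{i=1}^{n}X_{i\Delta_n}^{2}$ with $\Delta_n=T/n$. I would split $\widehat{\mu}_2-\V(X_t)=\left(\tfrac1T\int_0^{T}X_t^{2}\,dt-\V(X_t)\right)+R_n$, where $R_n$ is the Riemann--sum error. For the continuous part, $\{X_t\}$ is a centred stationary Gaussian process with absolutely continuous spectral measure, hence ergodic, so $\tfrac1T\int_0^{T}X_t^{2}\,dt\overset{a.s.}{\rightarrow}\V(X_t)$; and, writing $\gamma$ for its autocovariance, the central limit theorem for quadratic functionals of a stationary Gaussian process (the fourth--moment theorem in the second Wiener chaos, equivalently a continuous Breuer--Major theorem) gives $\sqrt{T}\left(\tfrac1T\int_0^{T}X_t^{2}\,dt-\V(X_t)\right)\overset{w}{\rightarrow}N\!\left(0,\,2\int_{-\infty}^{\infty}\gamma(u)^{2}\,du\right)$, \emph{provided} $\gamma\in L^{2}(\mathbb{R})$. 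This is precisely where $H<3/4$ is used: for $p=1$ the autocovariance of a fractional Ornstein--Uhlenbeck process decays like $c\,u^{2H-2}$, so $\gamma^{2}$ is integrable iff $H<3/4$ (for $p\ge 2$ the process is short--range dependent, $\gamma$ is bounded and integrable, hence square--integrable, and the condition is automatic). For the remainder I would use
\[
 |R_n|\le \frac1T\sum_{i=1}^{n}\int_{(i-1)\Delta_n}^{i\Delta_n}\bigl|X_{i\Delta_n}^{2}-X_t^{2}\bigr|\,dt\le 2\Bigl(\sup_{[0,T]}|X_t|\Bigr)\sup\bigl\{|X_s-X_t|:\,|s-t|\le\Delta_n,\ s,t\in[0,T]\bigr\},
\]
together with the fact that the local Hölder index of any FOU$(p)$ equals $H$ (recalled via \cite{Ibragimov}): the last factor is $O\!\left(\Delta_n^{H-\eta}\sqrt{\log n}\right)$ a.s.\ for every small $\eta>0$, while $\sup_{[0,T]}|X_t|=O\!\left(\sqrt{\log T}\right)$ a.s. Since $\Delta_n=T/n$ and $T=n\Delta_n$, the hypothesis $n(T/n)^{k}\to 0$ for some $k>1$ --- which, with $H>1/2$, leaves room to take $k\in(1,1+2H)$ --- yields $\sqrt{T}\,|R_n|\to 0$ a.s.; in particular $R_n=o_{\pr}\!\left(T^{-1/2}\right)$ and $R_n\to 0$ a.s.

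It remains to assemble the pieces. From $\widehat{\mu}_2\overset{a.s.}{\rightarrow}\V(X_t)$, $(\widehat{\sigma},\widehat{H})\overset{a.s.}{\rightarrow}(\sigma,H)$ and continuity of $g$ at the interior point $\left(\sigma,H,\V(X_t)\right)$, Proposition \ref{fouvariance} gives $\widehat{\lambda}\overset{a.s.}{\rightarrow}\lambda$. For the limit law, a first--order Taylor expansion about the true point gives
\[
 \widehat{\lambda}-\lambda=\partial_s g\,(\widehat{\sigma}-\sigma)+\partial_h g\,(\widehat{H}-H)+\partial_m g\,(\widehat{\mu}_2-\V(X_t))+o_{\pr}\!\left(\bigl\|(\widehat{\sigma}-\sigma,\widehat{H}-H,\widehat{\mu}_2-\V(X_t))\bigr\|\right),
\]
the partials being finite constants evaluated at $\left(\sigma,H,\V(X_t)\right)$. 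By the book--keeping observation the first two terms and the remainder are $o_{\pr}\!\left(T^{-1/2}\right)$, so $\sqrt{T}\left(\widehat{\lambda}-\lambda\right)=\partial_m g\big|_{*}\cdot\sqrt{T}\left(\widehat{\mu}_2-\V(X_t)\right)+o_{\pr}(1)\overset{w}{\rightarrow}N\!\left(0,\alpha(\sigma,H,\lambda)\right)$ with $\partial_m g\big|_{*}=-\dfrac{\lambda}{2H\,\V(X_t)}$ and $\alpha(\sigma,H,\lambda)=\bigl(\partial_m g\big|_{*}\bigr)^{2}\,2\int_{-\infty}^{\infty}\gamma(u)^{2}\,du$; using Proposition \ref{fouvariance} and the scaling $\gamma(u)=\sigma^{2}\lambda^{-2H}\gamma_{1,H,1}(\lambda u)$ (self--similarity of the fBm) one can then write $\alpha$ explicitly in terms of $(\sigma,H,\lambda)$ and $p$.

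The step I expect to be the main obstacle is the middle one: establishing the $\sqrt{T}$--CLT for $\tfrac1T\int_0^{T}X_t^{2}\,dt$ under the sharp integrability condition $\gamma\in L^{2}$ (which is exactly what forces $H<3/4$), and then showing that the triangular--array discretisation error $R_n$ is negligible at that scale --- the two delicate hypotheses $n(T/n)^{k}\to 0$ and $T\log^{2}n/n\to 0$ being calibrated precisely so that, respectively, $R_n$ and the plug--in errors of $\widehat{\sigma},\widehat{H}$ fall below the $T^{-1/2}$ threshold. Once this is in hand, the delta--method conclusion and the almost--sure statement are routine.
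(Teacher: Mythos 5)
Your proposal follows the same skeleton as the paper's proof: write $\widehat{\lambda}=G(\widehat{\sigma},\widehat{H},\widehat{\mu}_2)$, use Proposition \ref{fouvariance} to identify the true point, kill the $\widehat{\sigma}$- and $\widehat{H}$-contributions at scale $\sqrt{T}$ via Theorem \ref{asymptotic of H sigma} together with $T\log^2 n/n\to 0$, and reduce everything to $\sqrt{T}\bigl(\widehat{\mu}_2-\tfrac1T\int_0^T X_t^2\,dt\bigr)$ plus $\sqrt{T}\bigl(\tfrac1T\int_0^T X_t^2\,dt-\mu_2^0\bigr)$. Where you differ is in the two sub-steps of that reduction. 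For the discretization error the paper simply invokes Lemma 8 of \cite{Kessler}, whereas you give a direct pathwise bound through the local H\"older index $H$ and the a.s.\ growth of $\sup_{[0,T]}|X_t|$; this is more self-contained, but note that your bound needs $k<1+2H$, while the theorem only assumes $n(T/n)^k\to 0$ for \emph{some} $k>1$, and since $T/n\to 0$ this condition gets \emph{weaker} as $k$ grows, so "leaves room to take $k\in(1,1+2H)$" is not literally licensed by the hypothesis (e.g.\ $T=n^{0.9}$ satisfies it only for large $k$, and there your $\sqrt{T}\,\Delta_n^{H}$ bound blows up); it is, however, exactly the regime of the paper's own examples ($T=\log n$ or $T=n^{\alpha}$, $\alpha<1/2$, with $k=2<1+2H$). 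For the second piece, you explicitly supply the Breuer--Major/fourth-moment CLT for $\tfrac1T\int_0^T X_t^2\,dt$ with variance $2\int_{\mathbb{R}}\gamma(u)^2\,du$, correctly locating where $1/2<H<3/4$ enters ($\gamma\in L^2$ in the $p=1$ case, automatic for $p\ge 2$), and you compute $\partial_{\mu_2}G=-\lambda/(2H\mu_2^0)$ to make $\alpha(\sigma,H,\lambda)$ explicit; the paper's written proof stops at the decomposition and leaves this CLT implicit, so on this point your argument is actually more complete than the text. Aside from the $k$-range caveat above (which you should state as an assumption or rescue by a finer, in-probability treatment of the Riemann-sum error), the proposal is sound.
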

\begin{remark}
 Observe that $T_n=\log n$ or $T_n=n^{\alpha}$ where $0<\alpha <1/2$
 (by taking $k=2$) verify the conditions requested 
 by Theorem \ref{consistency}.
\end{remark}

\section{A comparison}\label{a comparison}

In this section we will show a comparison between the performance of the estimator $\hat{\lambda}$ 
proposed in this work that we will call $\hat{\lambda}_p$ (plug-in) with the estimator 
proposed in \cite{chichi} that we will call $\hat{\lambda}_U$ (minimizing the function $U=U_T$).
In addittion, from the same simulation study,  we corroborate the consistency and asymptotic normality
of $\hat{\lambda}_p$. 
Althought we didn't have a theoretical result, we include in Table 1 and Table 4, the results for $H=0.3$.
Tables 1 to 3, show the  mean estimation 
$\left ( \hat{\lambda} \right )$, mean error estimation 
$\left (\left |\hat{\lambda}-\lambda^{0}\right |\right ) $ being $\lambda^{0}$  the true value of
$\lambda$ and deviation of the estimator
$\left ( \text{sd}\left (\hat{\lambda}\right )\right )$ for $m=100$  replications, when the 
observed process
is a FOU$(\lambda^{(2)},H)$  where $\lambda=0.8$ and different
values of $H$, viewed in $n$ equispacied points of $[0,T]$, 
for different values of $T$ and $n$.
Tables 1 to 3, shows that (for both estimators) the mean error estimation is not necessarily decreasing as $n$
increasing, showing that it is very important the relation between $T$ and $n$. Anyway, in all the cases
considered, the mean error estimation take small values. The same occurs with the deviation of the estimators.
Table 3 show that when $H=0.7$, in all the considered cases, we have that 
sd$\left (\hat{\lambda}_p \right ) <$ sd$\left (\hat{\lambda}_U \right )$. The same occurs for $H=0.5$
and $H=0.3$ for values of $T=100$ and $T=50$.
In almost all the cases for $H=0.5$ and $H=0.7$, the mean error estimation is less for 
 $\left (\hat{\lambda}_p \right )$ than for  $\left (\hat{\lambda}_U \right )$.
 In general, tables 1 to 3, shows  better results for $\hat{\lambda}_p$ than for 
 $\hat{\lambda}_U$.
 Table 4 show the p-value of the Truncated Cram\'er-von Mises test of normality for $\hat{\lambda}_p $ proposed in 
 \cite{tcvm}.
For $H=0.5$ and $H=0.7$,  we non reject normality for all the values of $T$ and $n$ considered (according
with our theoretical results).
In the $H=0.3$ case we reject normality only for $T=25$, but for $T=50$ and $T=100$ the test non reject normality.
Tables 1 to 4 suggest that for values of $H<1/2$ it is possible to have consistency and asymptotic normality 
when $n,T$ goes to inifnite where $T/n \rightarrow 0$ with certain velocity, this remains as an open problem under which 
conditions this assertion hold. To estimate $H$, we have used the  Daubechies' filter of order $2$, $a=$\\
 $ \frac{1}{\sqrt2}(.4829629131445341,-.8365163037378077,.2241438680420134,.1294095225512603)$. 
 
The results for other filters were similar.
\begin{table*}[ht]\caption{ Comparison between $\hat{\lambda}_U$ and $\hat{\lambda}_p$ as
estimators of $\lambda$. We report the mean estimation 
$\left ( \hat{\lambda} \right )$, mean error estimation 
$\left (\left |\hat{\lambda}-\lambda^{0}\right |\right )$ and deviation 
$\left ( sd\left (\hat{\lambda}\right )\right )$ 
for a FOU$(\lambda^{(2)},H)$ viewed in $n$ equispacied points of $[0,T]$, 
where $\lambda=0.8$ and $H=0.3$ for $m=100$  replications. }
 \label{tH03ll}

\centering
\begin{tabular}{|c|c|c|c||c|c||c|c|}

    \hline
   $T$ & $n$ & $\hat{\lambda}_U$    & $\hat{\lambda}_p$ & $\left | \hat{\lambda}_U  - \lambda^{0}\right | $ & $\left | \hat{\lambda}_p  - \lambda^{0}\right | $
 & sd$\left ( \hat{\lambda}_U \right )$& sd $\left (\hat{\lambda}_p \right )$  \\
   \hline 
   $100$ & $1000$  &0.7536 &0.7416 &0.0464 &0.0584 &0.2185 &0.2101 \\
           &  $5000$  &0.7955 &0.7874 &0.0045 &0.0126 &0.1671 & 0.1661 \\
           & $10000$  &0.8265 &0.8089 &0.0265 &0.0089 &0.1462 &0.1244\\
            \hline
     $50$ & $1000$  &0.7713 &0.7940 &0.0287 &0.0060 &0.2710 &0.2595 \\
            & $5000$   &0.8249 &0.7708 &0.0249 &0.0292 &0.2380 &0.2281 \\
            & $10000$  &0.8199 &0.8022 &0.0199 &0.0022 &0.1983 &0.2073\\

   \hline
     $25$ & $1000$   &0.7205 &0.8258 &0.0795 &0.0258 &0.2550 &0.3627 \\
            & $5000$    &0.8605 &0.8604 & 0.0605 &0.0604 &0.2842 &0.3284 \\
            & $10000$  &0.8742 &0.8142 &0.0742 &0.0142 &0.2610 &0.2536 \\

   \hline

\end{tabular}
\end{table*}

\begin{table*}[ht]\caption{ Comparison between $\hat{\lambda}_U$ and $\hat{\lambda}_p$ as
estimators of $\lambda$. We report the mean estimation 
$\left ( \hat{\lambda} \right )$, mean error estimation 
$\left (\left |\hat{\lambda}-\lambda^{0}\right |\right )$ and deviation 
$\left ( sd\left (\hat{\lambda}\right )\right )$ 
for a FOU$(\lambda^{(2)},H)$ viewed in $n$ equispaced points of $[0,T]$, 
where $\lambda=0.8$ and $H=0.5$ for $m=100$  replications. }
 \label{tH05ll}

\centering
\begin{tabular}{|c|c|c|c||c|c||c|c|}

    \hline
   $T$ & $n$  & $\hat{\lambda}_U$    & $\hat{\lambda}_p$ & $\left | \hat{\lambda}_U  - \lambda^{0}\right | $ & $\left | \hat{\lambda}_p  - \lambda^{0}\right | $
 & sd$\left ( \hat{\lambda}_U \right )$& sd $\left (\hat{\lambda}_p \right )$  \\
   \hline 
   $100$ & $1000$  &0.7514 &0.7536 &0.0486 &0.0464 &0.1980 &0.1400 \\
           &  $5000$  &0.7969 &0.8021 &0.0481 &0.0021 &0.1840 &0.1330 \\
           & $10000$  &0.8159 &0.8126 &0.0159 &0.0126 &0.1622 &0.1228\\
            \hline
     $50$ & $1000$  &0.7673 &0.7880 &0.0337 &0.0120 &0.2633 &0.1989 \\
            & $5000$   &0.8358 &0.8309 &0.0358 &0.0309 &0.2132 &0.1743 \\
            & $10000$  &0.8135 &0.7879 &0.0135 &0.0121 &0.1968 &0.1666\\

   \hline
     $25$ & $1000$   &0.7331 &0.7977 &0.0669 & 0.0023 &0.2151 &0.2282 \\
            & $5000$    &0.8541 &0.8178 &0.0541 &0.0178 &0.2312 &0.2235 \\
            & $10000$  &0.8153 &0.7963 &0.0153 &0.0037 &0.2850 &0.2030 \\

   \hline

\end{tabular}
\end{table*}

\begin{table*}[ht]\caption{ Comparison between $\hat{\lambda}_U$ and $\hat{\lambda}_p$ as
estimators of $\lambda$. We report the mean estimation 
$\left ( \hat{\lambda} \right )$, mean error estimation 
$\left (\left |\hat{\lambda}-\lambda^{0}\right |\right )$ and deviation 
$\left ( sd\left (\hat{\lambda}\right )\right )$ 
for a FOU$(\lambda^{(2)},H)$ viewed in $n$ equispacied points of $[0,T]$, 
where $\lambda=0.8$ and $H=0.7$ for $m=100$  replications. }
 \label{tH07ll}

\centering
\begin{tabular}{|c|c|c|c||c|c||c|c|}

    \hline
   $T$ & $n$ & $\hat{\lambda}_U$    & $\hat{\lambda}_p$ & $\left | \hat{\lambda}_U  - \lambda^{0}\right | $ & $\left | \hat{\lambda}_p  - \lambda^{0}\right | $
 & sd$\left ( \hat{\lambda}_U \right )$& sd $\left (\hat{\lambda}_p \right )$  \\
   \hline 
   $100$ & $1000$  &0.7209&0.7353 &0.0791  &0.0647 & 0.1890 &0.1325 \\
           &  $5000$  &0.7999&0.8104 &0.0001 &0.0136 &0.1594 &0.1114 \\
           & $10000$  &0.8067&0.8012 &0.0067 &0.0012 &0.1411 &0.0932\\
            \hline
     $50$ & $1000$  &0.8042&0.7703 &0.0042 &0.0297 &0.2245  &0.1815 \\
            & $5000$   & 0.8379&0.8125 &0.0379 &0.0125  &0.1868  &0.1467  \\
            & $10000$  &0.7929&0.7931  &0.0071 &0.0069 &0.1993 &0.1314\\

   \hline
     $25$ & $1000$   & 0.8322&0.8106 &0.0322 &0.0106 &0.3440&0.2168 \\
            & $5000$    & 0.8490&0.8322 &0.0490 &0.0322 &0.2432 &0.2030 \\
            & $10000$  & 0.8410&0.8298 &0.0410 &0.0298 &0.2319 &0.1908\\

   \hline

\end{tabular}
\end{table*}

   \begin{table*}[ht]\caption{ p-value for the Truncated Cram\'er-von Mises test of normality
    for $\hat{\lambda}$ for the FOU$(\lambda^{(2)},H)$ process viewed in $n$ equispaced points of $[0,T]$, 
where $ \lambda=0.8$, in cases $H=0.3$, $H= 0.5$ and $H= 0.7$  for $m=100$  replications. }
 \label{pvalue_lambda}

 \centering
\begin{tabular}{|c|c||c|c|c|}

    \hline
   $T$ & $n$\ \ \ & $H=0.3$ & $H=0.5$ & $H=0.7$    \\
   \hline 
  $25$ & $1000$  &  0.001 & 0.177 & 0.942   \\
           &  $5000$ &  0.022 & 0.502 & 0.289 \\
           & $10000$ & 0.019 & 0.268 & 0.160  \\
           
           \hline
       $50$ & $1000$  &   0.928 &0.508 &0.239   \\
           &  $5000$ &  0.236 & 0.229 & 0.252 \\
           & $10000$ &  0.354 & 0.358 & 0.490  \\
           
           \hline
  
     $100$ & $1000$  &  0.872& 0.437 &0727    \\
           &  $5000$ &  0.704 &0.198 &0.201  \\
           & $10000$ & 0.091 & 0.848 & 0.491  \\
           
           \hline
     
\end{tabular}
\end{table*}

\section{Application to real data}\label{application to real data}
The oxygen saturation in blood of a newborn child has been monitored during seventeen hours. 
We have observed $304$ measures taken at intervals of $200$ seconds ($X_1,X_2,...,X_{304}$).
We have standardized the data set and 
fitted FOU$(\lambda^{(p)},\sigma, H)$ and FOU$(\lambda^{(p)}, H)$ for $p=1,2,3,4$ and compared
the performance with ARMA$(p,q)$ for $p,q \in \{0,1,2,3,4\}$. 
We measure the performance by taking the lastest $30$ and $60$ predictions at one step ($10 \%$ and $20\%$
aproximately of the data 
set), and computing the quality of the predictions from the mean absolute error of prediction ($MAE$) for last $m$
observations and their respective predictions, that is, \[MAE=\frac{1}{m}%
\sum_{i=1}^{m}\left\vert X_{n-m+i}-\widehat{X}_{n-m+i}\right\vert  \] 
 where $\overline{X}(m):=%
\frac{1}{m}\sum_{i=1}^{m}X_{n-m+i},$ and $X_{1},X_{2},...,X_{n}$ 
 are the real observations, while $%
\widehat{X}_{i}$ are the predictions given by the model for the value $%
X_{i}. $ 
Using this criterion, we obtain that the ARMA$(1,1)$ has the better results under the ARMA$(p,q)$ models
being $MAE=0.8683$ and $MAE=0.6740$ for $m=30$ and $m=60$ predictions respectively. Also, the classical
techniques to validation the model, resulting in a well adjusted by the ARMA$(1,1)$ model. We will show
in this section that we can clearly improve the adjusted ARMA$(1,1)$ model by taking a 
FOU$(\lambda^{(4)},\sigma,H)$.

 According with \cite{kalemkefou}, 
 to fit a FOU$(p)$ model, previously it is necessary to select a filter $a_k$ and a suitable value of $T$.
In this data set, given a filter $a_k$, the value of $MAE$ for different FOU$(p)$ models and 
different values of $T$ are similar. 
Neverthless, the performance were different
in function of the filter considered. In
Figure \ref{MAE_T_filters}, we show that using $T=30$, the minimum 
$MAE$ for $m=30$ predictions was reached for the $a_{26}$ filter. 
According with the theoretical results, see (\cite{kalemkefou}), we need to use $T$ large
but $T/n$ small, for this reason we report the results for $T=30$.
Anyway, under other values of
$T$, the results are similar.
 About the selection of $T$, althought Figure \ref{MAE_T_filters}
suggest to take values for $T \leq 10$, we have selected $T=10$ (to avoid the possibility 
of take $T$ small).
In Table \ref{predictionsfou}, we report the results of $MAE$ for $30$ and $60$ predictions
for the different FOU$(p)$ models considered, using the filter
$a_{26}$ and $T=10$.   Table \ref{predictionsfou}, show that FOU$(\lambda^{(p)},H)$
and FOU$(\lambda^{(p)},\sigma,H)$ (for any $p$)  performs similarly  
 (in several cases the difference is until the fivest 
 decimal),   slightly  better
for FOU$(\lambda^{(p)},H)$ than FOU$(\lambda^{(p)},\sigma,H)$
and
clearly outperforms the family of ARMA$(p,q)$ models. For other side, 
 Figure \ref{acfFOU} show that the observed
 autocorrelation function, is adjusted bad for ARMA$(1,1)$ and FOU$(\lambda^{(p)},H)$ and well
  for FOU$(\lambda^{(p)},\sigma,H)$ being the cases $p=3$ and $p=4$ the best models.

\begin{figure}[H]
 \centering
    \includegraphics[scale=0.41]{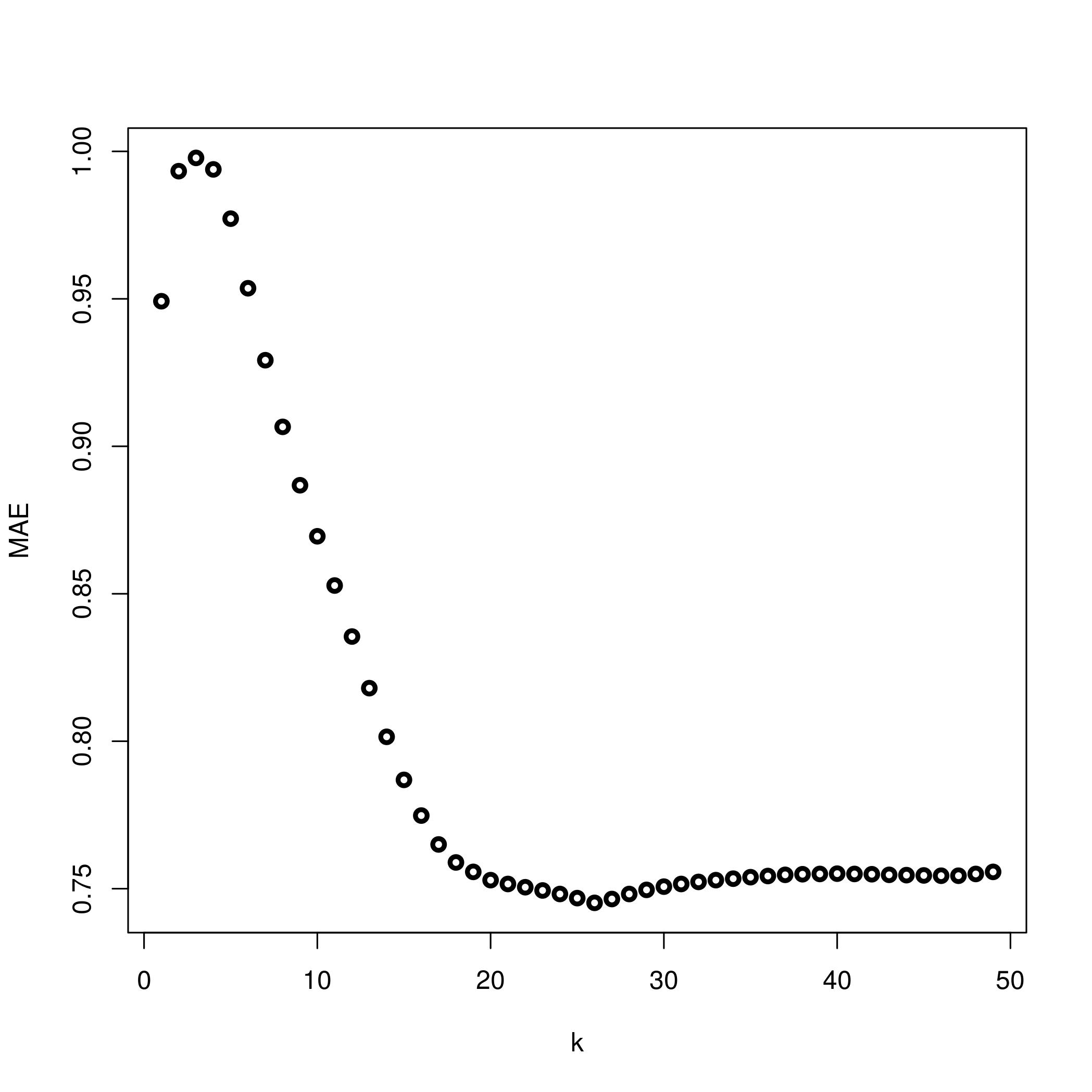}
   \includegraphics[scale=0.41]{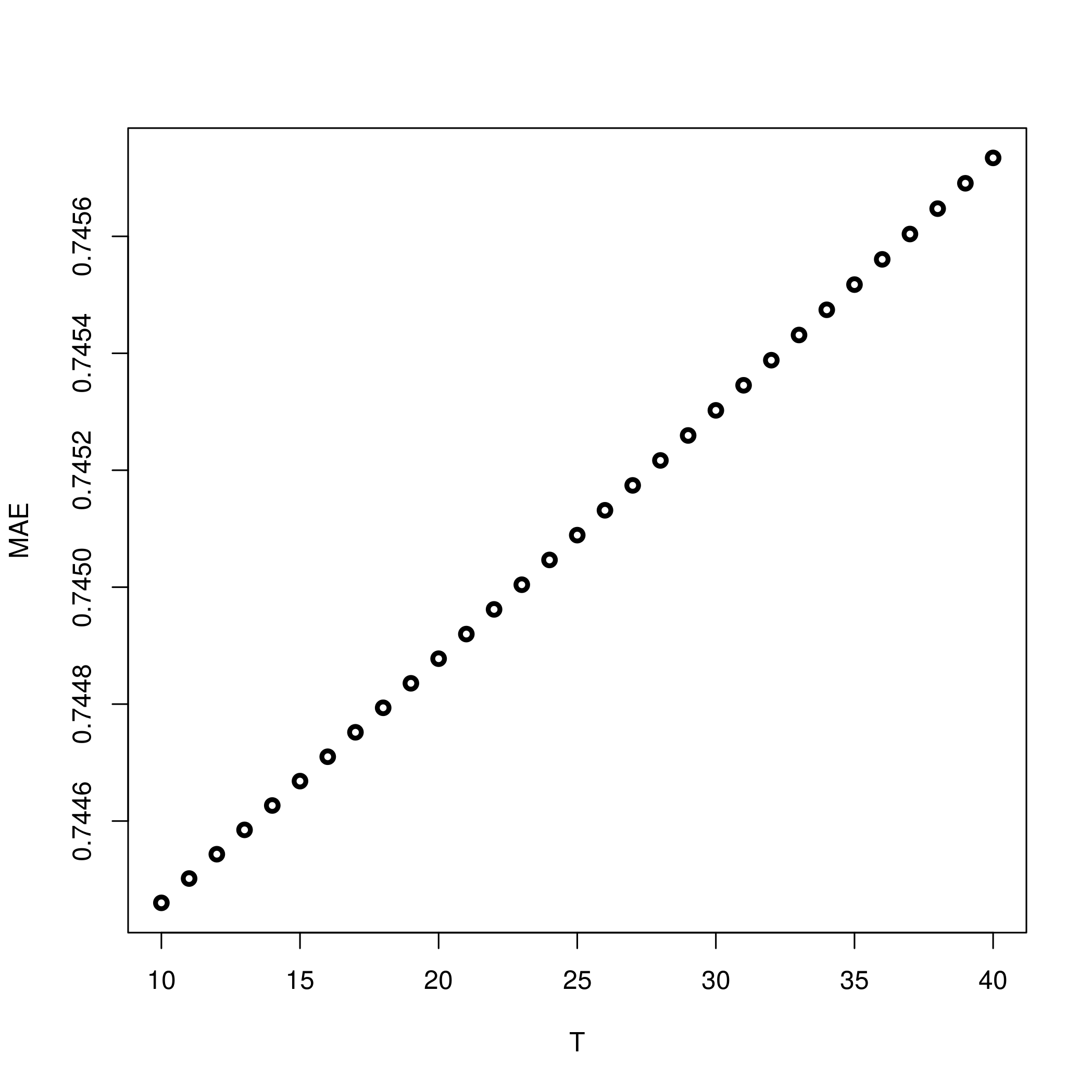} 
   \caption{In the left panel, MAE for $m=30$ predictions for different filters $a_k$ when the data are
   adjusted by FOU$(\lambda^{(2)},H)$ when $T=30$. 
   The minimum is reached at $k=26$. In the right panel,
   MAE for different values of $T$ when the data are
   adjusted by FOU$(\lambda^{(2)},H)$ using the filter $a_{26}$.} 
  \label{MAE_T_filters}
\end{figure}

\begin{figure}[H]
 \centering
    \includegraphics[scale=0.41]{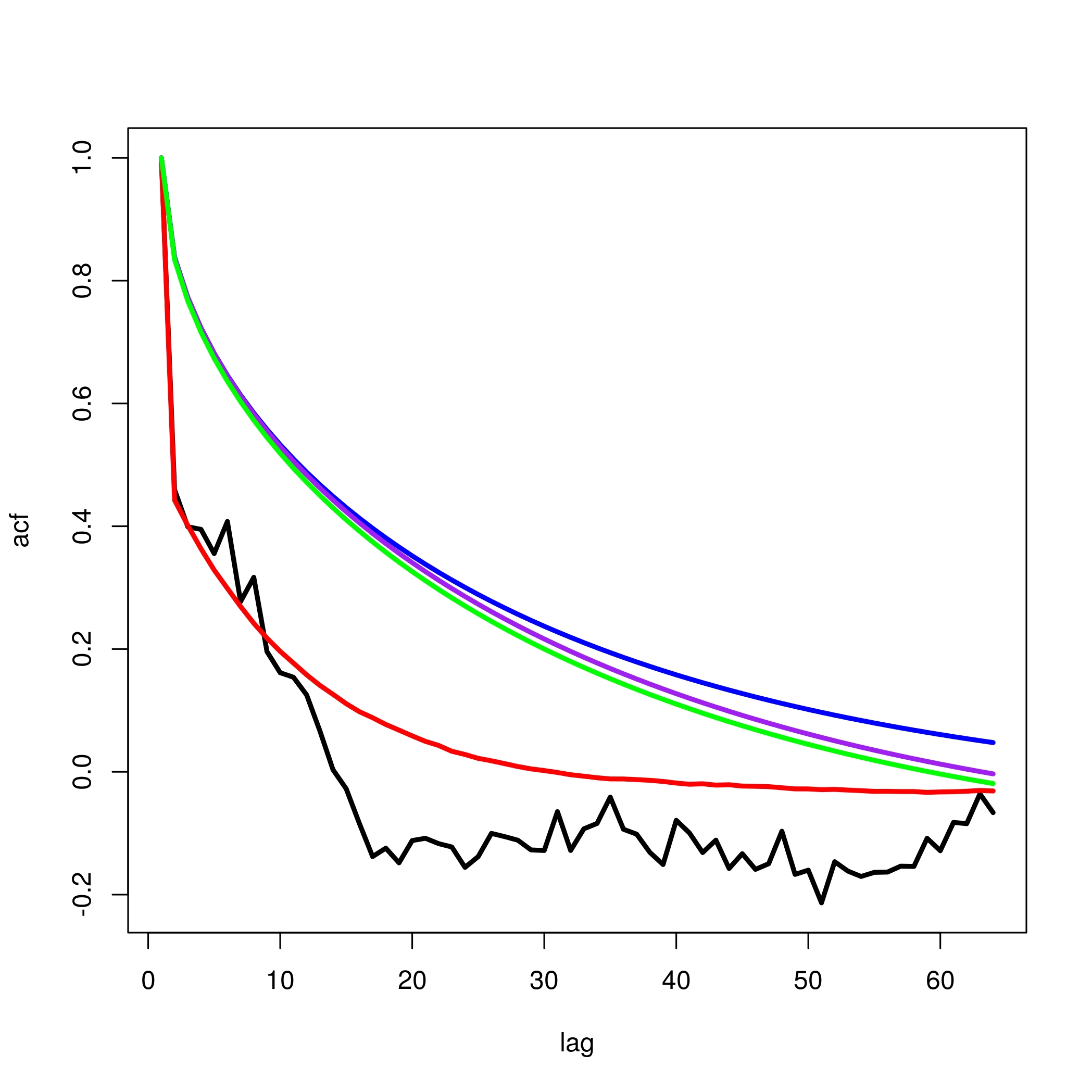}
   \includegraphics[scale=0.41]{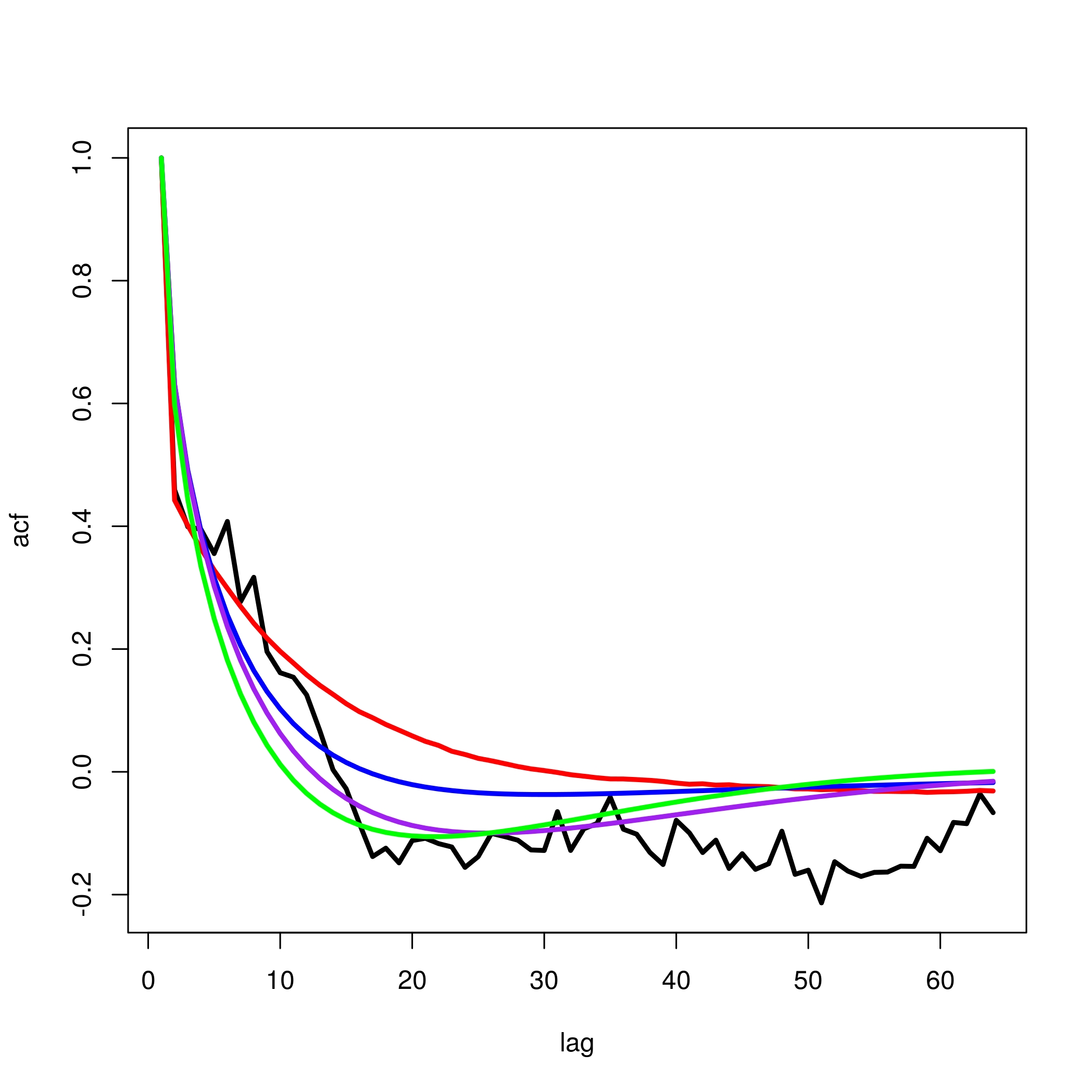} 
   \caption{Autocorrelation function of observed data (black), adjusted ARMA$(1,1)$ and adjusted FOU$(\lambda^{(p)},H)$
   in the left panel and FOU$(\lambda^{(p)},\sigma,H)$ in the right panel for $p=1$ (blue), $p=3$ (green) and
   $p=4$ (purple).}
      \label{acfFOU}
\end{figure}

 \begin{table*}[ht]\caption{$MAE$ for $30$ and $60$ predictions ($MAE_{30}$ and $MAE_{60}$
 respectively) for different  FOU$(p)$ models considered, using
 $T=10$ and $a_{26}$.}
 \label{predictionsfou}

 \centering
\begin{tabular}{|c|ccc|||c|c|}

    \hline
     & $\hat{H}$ & $\hat{\sigma}$ & $\hat{\lambda}$  & $MAE_{30}$ & $MAE_{60}$ \\
   \hline 
      FOU$(\lambda,H)$ &0.2468 &--- &0.1974 &0.7434 &0.6248 \\

      FOU$(\lambda,\sigma,H)$ &0.2468 &2.3044 &1.0710 &0.7509 &1.0700 \\

   FOU$(\lambda^{(2)},H)$ & 0.2468 & --- & 0.1112 & 0.7434 & 0.6249 \\
      FOU$(\lambda^{(2)},\sigma,H)$ &0.2468 & 2.3044  & 0.6032 & 0.7537 & 0.6280 \\
         FOU$(\lambda^{(3)},H)$  &0.2468 &--- & 0.0851 &0.7434 &0.6250 \\
               FOU$(\lambda^{(3)},\sigma,H)$ &0.2468 &2.3044 &0.4619 &0.7555 &0.6280  \\
                     FOU$(\lambda^{(4)},H)$ & 0.2468 & --- & 0.0715 & 0.7434 & 0.6250  \\
                             FOU$(\lambda^{(4)},\sigma,H)$ & 0.2468 &2.3044 &0.3882 &0.7567 &0.6279  \\
 ARMA$(1,1)$ &--- &--- &--- &0.8683 &0.6740 \\

           \hline
     
\end{tabular}
\end{table*}

\begin{remark}
 The values of $\hat{H}$ and $\hat{\sigma}$ showed in Table \ref{predictionsfou}, are 
 the same in all the models considered, because
 the estimation of both of them are independent of $p$.
\end{remark}
\begin{remark}\label{remark_FOUlambda_p}
 To model a time series from a FOU$(p)$ process, we have an apparent disvantage to take 
 only one $\lambda$ than several. Neverthless in the three real data set worked in 
 \cite{kalemkefou} we have observed no substantial difference between the performance of 
 FOU$(\lambda_1,\lambda_2,\sigma,H)$ or  FOU$(\lambda_1,\lambda_2,\lambda_3, \sigma,H)$ than 
  FOU$(\lambda^{(2)},\sigma,H)$ or FOU$(\lambda^{(3)},\sigma,H)$ and in the application 
  of this work either. 
\end{remark}

To conclude this section, we have observed that the FOU$(\lambda^{(3)},\sigma, H)$ and
FOU$(\lambda^{(4)},\sigma, H)$ models outperforms clearly the family of the ARMA$(p,q)$ models.
\section{Conclusions}\label{conclusions}
According with Remark \ref{remark_FOUlambda_p}, there is no substantial loss when we model a time series using
FOU$(\lambda^{(p)},\sigma,H)$ processes instead the more general FOU$(p)$ processes.
In this work we have proposed a new method to estimate $\lambda$ in a FOU$(\lambda^{(p)},\sigma,H)$
process. We showed that this
new method has several advantages. For the one hand, this new method is more easy and fast 
to compute because it is provenient by an explicit formula. On the other hand, it only requires to 
have observed the process in a equispaced sample of $[0,T]$, and we have proved consistency and asymptotic
normality (at least for $1/2<H<3/4$). 
In this way, we can estimate the three parameters of the model using explicit formulas, avoiding the possible approximation errors
of the numerical approximations and estimating more efficiently.
By simulations, we show that 
the new method to estimate $\lambda$ work well and is more efficient
than the proposed in \cite{kalemkefou}. Lastly, we include an application to real data, 
and we show that the new method work well too and outperforms the familiy of ARMA$(p,q)$.
To finish, we can say that the FOU$(p)$ processes can be considered as an alternative to ARMA (or ARFIMA) processes
to model time series and in this work, we give a way to estimate their parameters efficiently and with desirable
asymptotic properties.

\section{Proofs}\label{proofs}

To prove Proposition \ref{fouvariance} we need show the following two lemmas.
\begin{lemma}\[\]
Let $p \geq 2.$
The function $g(H)$ defined as
 \begin{equation*}
g(H)=\frac{\left( 2H-1\right) }{\Gamma \left( 2H\right) }\sum_{i,j=0}^{p-1}%
\frac{\binom{p-1}{i}\binom{p-1}{j}\left( -1\right) ^{i+j}}{i!j!}\mathbb{E}%
\left( \int_{0}^{+\infty }u^{i}e^{-u}du\int_{0}^{+\infty
}v^{j}e^{-v}\left\vert u-v\right\vert ^{2H-2}dv\right)
\end{equation*}

is a polynomial of degree $p-1$ with zeros in $1,2,...,p-1.$
\label{poly1}
\end{lemma}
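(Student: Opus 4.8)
The plan is to first put $g$ into a more transparent shape. Observing that the integrand inside $\mathbb{E}(\cdot)$ in the definition of $g$ is deterministic (so the $\mathbb{E}$ may be dropped) and that $\sum_{i=0}^{p-1}\binom{p-1}{i}(-u)^{i}/i!=L_{p-1}(u)$ is the $(p-1)$-th Laguerre polynomial, I would write
\[ g(H)=\frac{2H-1}{\Gamma(2H)}\int_{0}^{+\infty}\!\!\int_{0}^{+\infty}L_{p-1}(u)\,L_{p-1}(v)\,e^{-u-v}\,|u-v|^{2H-2}\,du\,dv , \]
which is finite for every $H>1/2$ because $2H-2>-1$ makes $|u-v|^{2H-2}$ integrable across the diagonal. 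Expanding $L_{p-1}(u)L_{p-1}(v)=\sum_{i,j=0}^{p-1}c_{i}c_{j}u^{i}v^{j}$ with $c_{i}=\binom{p-1}{i}(-1)^{i}/i!$, it then suffices to analyse the auxiliary integrals $A_{ij}(H):=\int\!\int u^{i}v^{j}e^{-u-v}|u-v|^{2H-2}\,du\,dv$ and sum.

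For polynomiality I would split $A_{ij}$ over $\{u>v\}$ and $\{u<v\}$; on the first region substitute $u=v+w$ with $w>0$ (on the second, $v=u+w$), expand $(v+w)^{i}$ by the binomial theorem, and carry out the resulting one-dimensional integrals in $v$ and in $w$, which are elementary Gamma integrals. Using $\Gamma(m+s)=\Gamma(s)\prod_{\ell=0}^{m-1}(s+\ell)$ with $s=2H-1$, one sees that $A_{ij}(H)/\Gamma(2H-1)$ is a polynomial in $H$ of degree $\max(i,j)\le p-1$. Since $\frac{2H-1}{\Gamma(2H)}=\frac{1}{\Gamma(2H-1)}$, it follows that $g$ coincides on $(1/2,+\infty)$ with a polynomial of degree at most $p-1$; collecting the top-degree contributions (only pairs with $\max(i,j)=p-1$ matter) one checks the coefficient of $H^{p-1}$ equals $(-1)^{p-1}/(p-1)!\neq 0$, so the degree is exactly $p-1$. (Alternatively, $H\,\Gamma(2H)\,g(H)$ is, by the Wiener-integral isometry for fBm, the variance of the FOU$\left(\lambda^{(p)},\sigma,H\right)$ process with $\lambda=\sigma=1$, which is strictly positive, so $g\not\equiv 0$.)

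For the zeros I would fix $H=k\in\{1,\dots,p-1\}$. Then $2k-2$ is a nonnegative even integer, so $|u-v|^{2k-2}=(u-v)^{2k-2}$ is an honest polynomial, and expanding $(u-v)^{2k-2}=\sum_{m=0}^{2k-2}\binom{2k-2}{m}u^{m}(-v)^{2k-2-m}$ turns the double integral into a finite sum of products $\bigl(\int_{0}^{\infty}u^{m}L_{p-1}(u)e^{-u}du\bigr)\bigl(\int_{0}^{\infty}v^{\,2k-2-m}L_{p-1}(v)e^{-v}dv\bigr)$. By orthogonality of $L_{p-1}$ to every polynomial of degree $\le p-2$ with respect to the weight $e^{-u}\,du$ on $(0,+\infty)$, such a product vanishes unless simultaneously $m\ge p-1$ and $2k-2-m\ge p-1$, which would force $2k-2\ge 2(p-1)$, i.e.\ $k\ge p$ — impossible for $k\le p-1$. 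Hence every term vanishes and $g(k)=0$. Combining with the previous paragraph, $g$ is a polynomial of degree $p-1$ with the $p-1$ distinct roots $1,\dots,p-1$, which is the claim.

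The only genuinely technical step is the polynomiality computation: one must reduce the $A_{ij}(H)$ to Gamma functions carefully enough to see the factor $\Gamma(2H-1)$ cancel against $2H-1$ and to pin down both the degree and the (nonzero) leading coefficient. The zero-locating step, by contrast, is essentially immediate once the Laguerre orthogonality is brought into play.
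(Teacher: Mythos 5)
Your proposal is correct, and while its skeleton matches the paper's (reduce the double integral to Gamma integrals to get polynomiality of degree at most $p-1$; at integer $H$ expand $|u-v|^{2H-2}$ and kill the resulting products of one-dimensional integrals; then pin down the exact degree), the ingredients differ in ways worth noting. You drop the spurious expectation and package the alternating sums as the Laguerre polynomial $L_{p-1}$, so the vanishing at $H=k$ comes from Laguerre orthogonality; this is cleaner than the paper's argument via the derivatives of $(1-x)^{p-1}$ at $x=1$, and it also quietly repairs a small over-claim there: the paper asserts that $\sum_{i=0}^{p-1}\binom{p-1}{i}(-1)^{i}(i+k)!/i!$ vanishes for every $k=0,\dots,2H-2$, which fails once $k\ge p-1$; the correct statement is exactly yours, namely that for each $k$ at least one of the two factors has a monomial of degree at most $p-2$ against $L_{p-1}$, since $m\ge p-1$ and $2k-2-m\ge p-1$ would force $k\ge p$. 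For polynomiality you substitute $u=v+w$ on each triangle instead of the paper's sum/difference coordinates $x=u+v$, $y=u-v$; both reduce to elementary Gamma integrals, and your observation that $(2H-1)/\Gamma(2H)=1/\Gamma(2H-1)$ cancels the $\Gamma(2H-1)$ factor plays the role of the paper's factorization of $\Gamma(i+j+2H)$. For the exact degree, the paper passes through a Beta-integral rewriting and is rather terse; your leading-coefficient claim $(-1)^{p-1}/(p-1)!$ is only asserted ("one checks"), but it is in fact correct (and consistent with Lemma 2), and your fallback via strict positivity of the FOU variance on $1/2<H<1$ closes the degree question completely and is not circular, since the identity $\mathbb{V}(X_t)=H\Gamma(2H)\lambda^{-2H}g(H)$ is obtained in Proposition \ref{fouvariance} from the Pipiras--Taqqu isometry independently of this lemma. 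Net effect: same overall strategy, but your treatment of the zeros is more rigorous than the paper's and your degree argument is more explicit.
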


\begin{lemma}\[\]
Let $p \geq 2$. Then, the function $g(H)$ defined in Lemma \ref{poly1} is
 \begin{equation*}
g(H)=\frac{\prod_{i=1}^{p-1}\left( i-H\right) }{\left( p-1\right) !}.
\end{equation*}
\label{poly2}
\end{lemma}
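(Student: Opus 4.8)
The plan is to combine Lemma~\ref{poly1} with a single boundary evaluation. By Lemma~\ref{poly1}, $g$ is a polynomial of degree $p-1$ whose zeros are precisely $1,2,\dots ,p-1$, so
\[
g(H)=c_{p}\prod_{i=1}^{p-1}\left( i-H\right)
\]
for a constant $c_{p}$ depending only on $p$; the task is therefore reduced to proving $c_{p}=1/(p-1)!$, and for that it suffices to know $g$ at one further point. Being a polynomial, $g$ extends continuously to $H=1/2$, and I would compute this value as the limit of the defining expression as $H\downarrow 1/2$.

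At that endpoint $\Gamma(2H)\to\Gamma(1)=1$, so it is enough to analyse, for each $(i,j)$, the quantity $(2H-1)\,I_{ij}(H)$ where
\[
I_{ij}(H)=\int_{0}^{+\infty}\int_{0}^{+\infty}u^{i}v^{j}e^{-u-v}\left\vert u-v\right\vert ^{2H-2}\,du\,dv .
\]
This integral is finite for $H>1/2$ and its only blow-up as $H\downarrow 1/2$ comes from a neighbourhood of the diagonal $u=v$. I would fix $\delta>0$ and split the domain into $\{\left\vert u-v\right\vert\ge\delta\}$, on which $\left\vert u-v\right\vert ^{2H-2}\le\delta^{2H-2}$ is bounded so the contribution is annihilated by the factor $2H-1$, and $\{\left\vert u-v\right\vert<\delta\}$; on the latter, after the substitution $s=u-v$ the remaining integrand is continuous in $s$ while the measure $(2H-1)\left\vert s\right\vert ^{2H-2}\ind_{\{\left\vert s\right\vert<\delta\}}\,ds$ has total mass $2\delta^{2H-1}\to 2$ and concentrates at $s=0$, giving $(2H-1)I_{ij}(H)\to 2\int_{0}^{+\infty}v^{i+j}e^{-2v}\,dv=(i+j)!/2^{i+j}$. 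Feeding this back into $g$ and using $(i+j)!/(i!\,j!)=\binom{i+j}{i}$ yields
\[
g\!\left(\tfrac12\right)=\sum_{i,j=0}^{p-1}\binom{p-1}{i}\binom{p-1}{j}\binom{i+j}{i}\left(-\tfrac12\right)^{i+j}.
\]

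The remaining and genuinely computational step is to evaluate this double sum. I would use generating functions together with coefficient extraction: from $\binom{i+j}{i}=[w^{i}](1+w)^{i+j}$ and $[w^{i}]F(w)=\operatorname{Res}_{w=0}\bigl(F(w)w^{-i-1}\bigr)$, summing the two resulting geometric sums over $i$ and $j$ collapses everything to
\[
g\!\left(\tfrac12\right)=\operatorname{Res}_{w=0}\frac{1}{w}\left(\frac{w-1}{2w}\right)^{p-1}\left(\frac{1-w}{2}\right)^{p-1}=\frac{(-1)^{p-1}}{2^{2p-2}}\,[w^{p-1}](1-w)^{2p-2}=\frac{1}{2^{2p-2}}\binom{2p-2}{p-1}.
\]
(Alternatively, one may note $\sum_{i=0}^{p-1}\binom{p-1}{i}(-t/2)^{i}/i!=L_{p-1}(t/2)$, write $g(1/2)=\int_{0}^{+\infty}e^{-t}L_{p-1}(t/2)^{2}\,dt$, and invoke Laguerre orthogonality with rescaled argument.) Since $\prod_{i=1}^{p-1}\left(i-\tfrac12\right)=\prod_{i=1}^{p-1}\tfrac{2i-1}{2}=\dfrac{(2p-2)!}{2^{2p-2}(p-1)!}$, comparing with $g(\tfrac12)=c_{p}\prod_{i=1}^{p-1}\left(i-\tfrac12\right)$ forces $c_{p}=\binom{2p-2}{p-1}(p-1)!/(2p-2)!=1/(p-1)!$, which is exactly the claimed formula.

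I expect the main obstacle to be the rigorous handling of the $H\downarrow 1/2$ limit — precisely justifying the diagonal localisation and the passage to the limit inside the integral — together with the sign- and power-of-$2$ bookkeeping in the residue evaluation of the combinatorial sum; neither step is deep, but both require care with the constants.
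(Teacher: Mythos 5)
Your argument is correct, but the way you pin down the constant is genuinely different from the paper's. Both proofs start from the same reduction: Lemma \ref{poly1} forces $g(H)=c_p\prod_{i=1}^{p-1}(i-H)$, so a single further evaluation of $g$ determines $c_p$. The paper takes $H=0$ and reads $g(0)=1$ off the explicit algebraic expression (\ref{53}) obtained inside the proof of Lemma \ref{poly1}, so its normalization is essentially a one-line algebraic evaluation; you instead take $H=\tfrac{1}{2}$ and work directly from the integral definition, proving the approximate-identity limit $(2H-1)\int_0^{\infty}\!\int_0^{\infty}u^iv^je^{-u-v}\left\vert u-v\right\vert^{2H-2}du\,dv\to (i+j)!/2^{i+j}$ as $H\downarrow \tfrac{1}{2}$ and then evaluating $\sum_{i,j=0}^{p-1}\binom{p-1}{i}\binom{p-1}{j}\binom{i+j}{i}\left(-\tfrac{1}{2}\right)^{i+j}=2^{-(2p-2)}\binom{2p-2}{p-1}$ by residues. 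I checked both steps (including the identity for $p=2,3$ and the constant bookkeeping against $\prod_{i=1}^{p-1}\left(i-\tfrac{1}{2}\right)=(2p-2)!/\bigl(2^{2p-2}(p-1)!\bigr)$), and they are sound; your diagonal-localisation sketch is exactly what is needed, since off the diagonal $\left\vert u-v\right\vert^{2H-2}\le\delta^{2H-2}$ bounds that contribution by $\delta^{2H-2}i!j!$, which the factor $2H-1$ kills, while $(2H-1)\left\vert s\right\vert^{2H-2}\ind_{\{\left\vert s\right\vert<\delta\}}ds$ has mass $2\delta^{2H-1}\to 2$ concentrating at $s=0$ against the bounded continuous function $s\mapsto\int (v+s)^iv^je^{-2v-s}dv$. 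What your route buys is independence from the internals of Lemma \ref{poly1}: you use only its statement plus the original definition of $g$, whereas the paper re-uses its intermediate formula (\ref{53}); the price is the extra analysis at $H\downarrow\tfrac{1}{2}$ and a nontrivial binomial identity, neither of which is needed with the paper's choice $H=0$. Two small inaccuracies that do not affect correctness: the sums over $i$ and $j$ in your coefficient extraction are finite binomial sums, not geometric series; and the parenthetical alternative is not literally Laguerre orthogonality (rescaling the argument turns the weight into $e^{-2x}$), although the classical integral $\int_0^{\infty}e^{-2x}L_{p-1}(x)^2dx=\tfrac{1}{2}\,4^{-(p-1)}\binom{2p-2}{p-1}$ does give the same value.
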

\begin{proof}[Proof of Lemma \ref{poly1}.] \[\]
For every $i,j=0,1,2,...,p-1,$ define 
\begin{equation}\label{gij}
 g_{ij}\left( H\right) =\mathbb{E}%
\left( \int_{0}^{+\infty }u^{i}e^{-u}du\int_{0}^{+\infty
}v^{j}e^{-v}\left\vert u-v\right\vert ^{2H-2}dv\right) .
\end{equation}

Then 
\begin{equation*}
g(H)=\frac{\left( 2H-1\right) }{\Gamma \left( 2H\right) }\sum_{i,j=0}^{p-1}%
\frac{\binom{p-1}{i}\binom{p-1}{j}\left( -1\right) ^{i+j}}{i!j!}g_{ij}\left(
H\right) .
\end{equation*}

If we make $x=u+v$, $v=u-v$, we obtain that $g_{ij}\left( H\right) =$

\begin{equation*}
\frac{1}{2^{i+j+1}}\int_{0}^{+\infty }e^{-x}dx\int_{0}^{x}\left( x+y\right)
^{i}\left( x-y\right) ^{j}y^{2H-2}dy+
\end{equation*}

\begin{equation*}
 \frac{1}{2^{i+j+1}}\int_{0}^{+\infty
}e^{-x}dx\int_{-x}^{0}\left( x+y\right) ^{i}\left( x-y\right) ^{j}\left(
-y\right) ^{2H-2}dy=
\end{equation*}%
\begin{equation*}
A_{ij}(H)+B_{ij}(H).
\end{equation*}

\begin{equation*}
A_{ij}(H)=\frac{1}{2^{i+j+1}}\int_{0}^{+\infty
}e^{-x}dx\int_{0}^{x}\sum_{h=0}^{i}\binom{i}{h}x^{h}y^{i-h}\sum_{k=0}^{j}%
\binom{j}{k}x^{k}\left( -y\right) ^{j-k}y^{2H-2}dy=
\end{equation*}%
\begin{equation*}
\frac{1}{2^{i+j+1}}\sum_{h=0}^{i}\binom{i}{h}\sum_{k=0}^{j}\binom{j}{k}%
\left( -1\right) ^{j-k}\int_{0}^{+\infty
}x^{h+k}e^{-x}dx\int_{0}^{x}y^{i+j-h-k+2H-2}dy=
\end{equation*}%
\begin{equation*}
\frac{\Gamma \left( i+j+2H\right) \left( -1\right) ^{j}}{2^{i+j+1}}%
\sum_{h=0}^{i}\binom{i}{h}\sum_{k=0}^{j}\frac{\binom{j}{k}\left( -1\right)
^{k}}{i+j-h-k+2H-1}.
\end{equation*}

\begin{equation*}
B_{ij}(H)=\frac{1}{2^{i+j+1}}\int_{0}^{+\infty }e^{-x}dx\int_{-x}^{0}\left(
x+y\right) ^{i}\left( x-y\right) ^{j}\left( -y\right) ^{2H-2}dy=
\end{equation*}%
\begin{equation*}
\frac{1}{2^{i+j+1}}\int_{0}^{+\infty }e^{-x}dx\int_{0}^{x}\left( x-y\right)
^{i}\left( x+y\right) ^{j}y^{2H-2}dy=A_{ji}(H).
\end{equation*}

\bigskip

Then%
\begin{equation*}
g\left( H\right) =\frac{\left( 2H-1\right) }{\Gamma \left( 2H\right) }%
\sum_{i,j=0}^{p-1}\frac{\binom{p-1}{i}\binom{p-1}{j}\left( -1\right) ^{i+j}}{%
i!j!}\left( A_{ij}(H)+A_{ji}(H)\right) =
\end{equation*}%
\begin{equation}
\frac{\left( 2H-1\right) }{\Gamma \left( 2H\right) }\sum_{i,j=0}^{p-1}\frac{%
\binom{p-1}{i}\binom{p-1}{j}\left( -1\right) ^{i}\Gamma \left( i+j+2H\right) 
}{i!j!2^{i+j}}\sum_{h=0}^{i}\sum_{k=0}^{j}\frac{\binom{i}{h}\binom{j}{k}%
\left( -1\right) ^{k}}{i+j-h-k+2H-1}.
\label{53}
\end{equation}

If we replacing in the last equality the expression $\Gamma \left(
i+j+2H\right) $ for

$\left( i+j+2H-1\right) \left( i+j+2H-2\right) ...\left( 1+2H\right)
2H\Gamma \left( 2H\right) $ we obtain that 
\begin{equation*}
g(H)=\left( 2H-1\right)\times 
\end{equation*}

\begin{equation}
 \sum_{i,j=0}^{p-1}\frac{\binom{p-1}{i}\binom{p-1}{j}%
\left( -1\right) ^{i}\left( i+j+2H-1\right) ...\left( 1+2H\right) 2H}{%
i!j!2^{i+j}}\sum_{h=0}^{i}\sum_{k=0}^{j}\frac{\binom{i}{h}\binom{j}{k}\left(
-1\right) ^{k}}{i+j-h-k+2H-1}
\label{98}
\end{equation}%
where in the case $i=j=0,$ the expression

$\left( i+j+2H-1\right) \left( i+j+2H-2\right) ...\left( 1+2H\right)
2H\Gamma \left( 2H\right) $ it means $\Gamma \left( 2H\right) .$

Observing that in the case $i=j=0$ we have $2H$ and in the rest of summands
(where $i+j\geq 1$) we have powers of $H$ (because \ for any $h,k$ the
expression $i+j-h-k+2H-1$ appears in the expanssion $\left( i+j+2H-1\right)
...\left( 1+2H\right) H$). This concludes the proof that $g$ is a polynomial.

To prove that $g$ has degree $p-1$, observe that in the case $H>1/2$ we can
write 
\begin{equation*}
\sum_{h=0}^{i}\sum_{k=0}^{j}\frac{\binom{i}{h}\binom{j}{k}\left( -1\right)
^{k}}{i+j-h-k+2H-1}=\sum_{h=0}^{i}\sum_{k=0}^{j}\frac{\binom{i}{h}\binom{j}{k%
}\left( -1\right) ^{j-k}}{h+k+2H-1}=
\end{equation*}%
\begin{equation*}
\sum_{h=0}^{i}\binom{i}{h}\sum_{k=0}^{j}\binom{j}{k}\left( -1\right)
^{j-k}\int_{0}^{1}x^{h+k+2H-2}dx=
\end{equation*}%
\begin{equation*}
\sum_{h=0}^{i}\binom{i}{h}\int_{0}^{1}x^{h+2H-2}\sum_{k=0}^{j}\binom{j}{k}%
\left( -1\right) ^{j-k}x^{k}dx=\left( -1\right) ^{j}\sum_{h=0}^{i}\binom{i}{h%
}\int_{0}^{1}x^{h+2H-2}\left( 1-x\right) ^{j}dx=
\end{equation*}%
\begin{equation}
\left( -1\right) ^{j}\sum_{h=0}^{i}\binom{i}{h}\frac{\Gamma \left(
h+2H-1\right) j!}{\Gamma \left( h+j+2H\right) }.
\label{101}
\end{equation}%
Putting (\ref{101}) in (\ref{98}) we obtain that 
\begin{equation*}
g(H)=\left( 2H-1\right)\times
\end{equation*}

\begin{equation*}
 \sum_{i,j=0}^{p-1}\frac{\binom{p-1}{i}\binom{p-1}{j}%
\left( -1\right) ^{i+j}\left( i+j+2H-1\right) ...\left( 1+2H\right) 2H}{%
i!2^{i+j}}\sum_{h=0}^{i}\binom{i}{h}\frac{\Gamma \left( h+2H-1\right) }{%
\Gamma \left( h+j+2H\right) }=
\end{equation*}%
\begin{equation*}
\left( 2H-1\right)\times 
\end{equation*}

\begin{equation*}
 \sum_{i,j=0}^{p-1}\sum_{h=0}^{i}\frac{\binom{p-1}{i}\binom{p-1}{j}\binom{i}{h}%
\left( -1\right) ^{i+j}\left( i+j+2H-1\right) ...\left( 1+2H\right) 2H}{%
i!2^{i+j}\left( h+j+2H-1\right) ...\left(
h+2H-1\right)}.
\end{equation*}

Then, $g$ is a polynomial of $p-1$ degree.

Observe that in the case in which $H<1/2$, the integral $%
\int_{0}^{1}x^{h+k+2H-2}dx$ does not exist when $h=k=0$, but the results
ramians valid if we separate the case $h=k=0$ and the case $h+k\geq 1.$

To prove that $g(1)=g(2)=...=g(p-1)=0$, for values of $H=1,2,...,p-1$ we can
develop the binomial formula for $\left\vert u-v\right\vert ^{2H-2}$ and we
obtain that 
\begin{equation*}
\mathbb{E}\left( \int_{0}^{+\infty }u^{i}e^{-u}du\int_{0}^{+\infty
}v^{j}e^{-v}\left\vert u-v\right\vert ^{2H-2}dv\right) =
\end{equation*}

\begin{equation*}
\mathbb{E}\left(
\int_{0}^{+\infty }u^{i}e^{-u}du\int_{0}^{+\infty
}v^{j}e^{-v}\sum_{k=0}^{2H-2}\binom{2H-2}{k}u^{k}v^{2H-2-k}dv\right) =
\end{equation*}%
\begin{equation*}
\sum_{k=0}^{2H-2}\binom{2H-2}{k}\mathbb{E}\left( \int_{0}^{+\infty
}u^{i+k}e^{-u}du\int_{0}^{+\infty }v^{j+2H-2-k}e^{-v}dv\right)
=
\end{equation*}

\begin{equation}
 \sum_{k=0}^{2H-2}\binom{2H-2}{k}\left( i+k\right) !\left( j+2H-2-k\right) !.
 \label{205}
\end{equation}

\bigskip Putting (\ref{205}) in  (\ref{gij}) we
obtain that 
\begin{equation*}
g(H)=\frac{\left( 2H-1\right) }{\Gamma \left( 2H\right) }\sum_{i,j=0}^{p-1}%
\frac{\binom{p-1}{i}\binom{p-1}{j}\left( -1\right) ^{i+j}}{i!j!}%
\sum_{k=0}^{2H-2}\binom{2H-2}{k}\left( i+k\right) !\left( j+2H-2-k\right) !=
\end{equation*}%
\begin{equation*}
\frac{\left( 2H-1\right) }{\Gamma \left( 2H\right) }\sum_{k=0}^{2H-2}\binom{%
2H-2}{k}\sum_{i=0}^{p-1}\frac{\binom{p-1}{i}\left( i+k\right) !\left(
-1\right) ^{i}}{i!}\sum_{i=0}^{p-1}\frac{\binom{p-1}{j}\left( -1\right)
^{j}\left( j+2H-2-k\right) !}{j!}.
\end{equation*}%
Therefore, It is enough to show that 
\begin{equation*}
\sum_{i=0}^{p-1}\frac{\binom{p-1}{i}\left( i+k\right) !\left( -1\right) ^{i}%
}{i!}=0\text{ for every }H=1,2,...,p-1\text{ and }k=0,1,2,...,2H-2.
\end{equation*}

This result it follows from the binomial formula of $\alpha (x)=\left(
1-x\right) ^{p-1}$ and using that $\alpha \left( 1\right) =\alpha ^{\prime
}\left( 1\right) =...=\alpha ^{\left( p-1\right) }\left( 1\right) =0.$

This concludes the proof that $g(1)=g(2)=...=g(p-1)=0.$
 
\end{proof}
\begin{proof}[Proof of Lemma \ref{poly2}.]\[\]
 
From (\ref{53}) we deduce that $g(0)=1$, therefore the corollary follows immediately
from Lemma \ref{poly1}.
\end{proof}
\begin{proof}[Proof of Proposition \ref{fouvariance}.]\[\]
It is enough to consider the case $p \geq 2$, because when $p=1$ we have that (\ref{V(Xt)})
is the well known variance of a fractional Ornstein-Uhlenbeck process.
 If $\left\{ X_{t}\right\} _{t\in \mathbb{R}}\sim $FOU$\left( \lambda
^{\left( p\right) },\sigma ,H\right) $ then $X_{t}=\sigma \sum_{i=0}^{p-1}%
\binom{p-1}{i}T_{\lambda }^{\left( i\right) }\left( B_{H}\right) (t)$ where $%
\left\{ B_{H}\left( t\right) \right\} _{t\in \mathbb{R}}$ is a fractional
Brownian motion with Hurst parameter $H$ and the operators $T_{\lambda
}^{\left( i\right) }$ are defined in (\ref{hh}), thus, it is enough to prove the
formula in the case in which $\sigma =1.$ Therefore, if $\left\{
X_{t}\right\} _{t\in \mathbb{R}}\sim $FOU$\left( \lambda ^{\left( p\right)
},1,H\right) $, then 
\begin{equation*}
\mathbb{V}\left( X_{t}\right) =\mathbb{E}\left( X_{0}^{2}\right) =\mathbb{E}%
\left( \sum_{i,j=0}^{p-1}\binom{p-1}{i}\binom{p-1}{j}T_{\lambda }^{\left(
i\right) }\left( B_{H}\right) (0)T_{\lambda }^{\left( j\right) }\left(
B_{H}\right) (0)\right) =
\end{equation*}%
\begin{equation*}
\mathbb{E}\left( \sum_{i,j=0}^{p-1}\binom{p-1}{i}\binom{p-1}{j}\int_{-\infty
}^{0}\frac{\left( \lambda w\right) ^{i}}{i!}e^{\lambda w}dB_{H}\left(
w\right) \int_{-\infty }^{0}\frac{\left( \lambda z\right) ^{j}}{j!}%
e^{\lambda z}dB_{H}\left( z\right) \right) =
\end{equation*}%
\begin{equation}
\mathbb{E}\left( H\left( 2H-1\right) \sum_{i,j=0}^{p-1}\binom{p-1}{i}\binom{%
p-1}{j}\int_{-\infty }^{0}\frac{\left( \lambda w\right) ^{i}}{i!}e^{\lambda
w}dw\int_{-\infty }^{0}\frac{\left( \lambda z\right) ^{j}}{j!}e^{\lambda
z}\left\vert w-z\right\vert ^{2H-2}dz\right) .
\label{253}
\end{equation}%

The last equality in \ref{253} is due to 
the following formula, whose proof can be seen in  \cite{Pipiras}: 
if $H\in \left( 1/2,1\right) $ and \[f,g\in \left\{ f:\mathbb{R\rightarrow R}\text{: }\int \int_{\mathbb{R}%
^{2}}\left\vert f(u)f(v)\right\vert \left\vert u-v\right\vert
^{2H-2}dudv<+\infty \right\}, \] then 
\begin{equation}
\mathbb{E}\left( \int_{-\infty }^{+\infty }f(u)dB_{H}(u)\int_{-\infty
}^{+\infty }g(v)dB_{H}(v)\right) =  \label{pipiras}
\end{equation}%
\begin{equation*}
H(2H-1)\int_{-\infty }^{+\infty }f(u)du\int_{-\infty }^{+\infty
}g(v)\left\vert u-v\right\vert ^{2H-2}dv.
\end{equation*}

If we change $\lambda w=-u$ and $\lambda z=-v$ we obtain that (\ref{253}) is equal to 
\begin{equation*}
\frac{H\left( 2H-1\right) }{\lambda ^{2H}}\sum_{i,j=0}^{p-1}\frac{\binom{p-1%
}{i}\binom{p-1}{j}\left( -1\right) ^{i+j}}{i!j!}\mathbb{E}\left(
\int_{0}^{+\infty }u^{i}e^{-u}du\int_{0}^{+\infty }v^{j}e^{-v}\left\vert
u-v\right\vert ^{2H-2}dv\right) =
\end{equation*}%
\begin{equation*}
\frac{H\Gamma \left( 2H\right) }{\lambda ^{2H}}g(H)
\end{equation*}%
where $g(H)$ is the function defined in Lemma \ref{poly1}. 
From Lemma \ref{poly2} we obtain the result.
This concludes the proof.
\end{proof}
\begin{proof}[Proof of Theorem \ref{consistency}.]\[\]
Throughout this theorem we will call $\lambda ^{0},\sigma ^{0}$ and $H^{0}$
the true value of the parameters, also we will call $\mu _{2}^{0}$ the true
value of the $\mathbb{V}\left( X_{t}\right) $ given in (\ref{V(Xt)}).

Observe that $\widehat{\lambda }=G\left( \widehat{\sigma },\widehat{H},%
\widehat{\mu }_{2}\right) $ where $G\left( \sigma ,H,\mu _{2}\right) =\left( 
\frac{\sigma ^{2}H\Gamma \left( 2H\right) \prod_{i=1}^{p-1}\left(
i-H\right) }{\left( p-1\right) !\mu _{2}}\right) .$

From the ergodic theorem we know that $\frac{1}{T}\int_{0}^{T}X_{t}^{2}dt%
\overset{a.s.}{\rightarrow }\mu _{2}^{0}$.

Any FOU$\left( \lambda ^{\left( p\right) },\sigma ,H\right) $ is a Gaussian
process with H\"{o}lder index $H$, then, the conditions $1/2<H<3/4$ and $%
n\left( \frac{T}{n}\right) ^{k}\rightarrow 0$ as $n\rightarrow +\infty $ for
some $k>1$ allows to affirm that $\sqrt{T}\left( \frac{1}{T}%
\int_{0}^{T}X_{t}^{2}dt-\widehat{\mu }_{2}\right) \overset{P}{\rightarrow }0$
(Lemma 8 in (\cite{Kessler})), thus $\widehat{\mu }_{2}\overset{P}{\rightarrow }%
\mu _{2}^{0}.$

\bigskip From continuity of $G$ we obtain immediately that 
\begin{equation*}
\widehat{\lambda }=G\left( \widehat{\sigma },\widehat{H},\widehat{\mu }%
_{2}\right) \overset{a.s.}{\rightarrow }G\left( \sigma ^{0},H^{0},\mu
_{2}^{0}\right) =\lambda ^{0}.
\end{equation*}

Applying the mean value theorem we have 
\begin{equation*}
G\left( \widehat{\sigma },\widehat{H},\widehat{\mu }_{2}\right) -G\left(
\sigma ^{0},H^{0},\mu _{2}^{0}\right) =\nabla G\left( \widetilde{\sigma },%
\widetilde{H},\widetilde{\mu }_{2}\right) .\left( \widehat{\sigma }-\sigma
^{0},\widehat{H}-H,\widehat{\mu }_{2}-\mu _{2}^{0}\right) 
\end{equation*}%
where $\left( \widetilde{\sigma },\widetilde{H},\widetilde{\mu }_{2}\right)
\in \left[ \left( \widehat{\sigma },\widehat{H},\widehat{\mu }_{2}\right)
,\left( \sigma ^{0},H^{0},\mu _{2}^{0}\right) \right] .$ Then 
\begin{equation*}
 \sqrt{T}\left ( \hat{\lambda}-\lambda^{0}\right )=\sqrt{T}%
\left( G\left( \widehat{\sigma },\widehat{H},\widehat{\mu }_{2}\right)
-G\left( \sigma ^{0},H^{0},\mu _{2}^{0}\right) \right) =
\end{equation*}
 \ 
\begin{equation*}
\sqrt{T}\left( \frac{\partial G\left( \widetilde{\sigma },\widetilde{H},%
\widetilde{\mu }_{2}\right) }{\partial \sigma }\left( \widehat{\sigma }%
-\sigma ^{0}\right) +\frac{\partial G\left( \widetilde{\sigma },\widetilde{H}%
,\widetilde{\mu }_{2}\right) }{\partial H}\left( \widehat{H}-H\right) +\frac{%
\partial G\left( \widetilde{\sigma },\widetilde{H},\widetilde{\mu }%
_{2}\right) }{\partial \mu _{2}}\left( \widehat{\mu }_{2}-\mu
_{2}^{0}\right) \right) .
\end{equation*}

Observe that the derivatives of $G$ with respect to $\sigma ,H$ and $\mu _{2}
$ are bounded in a neigbourhood of $\left( \sigma ^{0},H^{0},\mu
_{2}^{0}\right) .$

From  Theorem \ref{asymptotic of H sigma} and condition $\frac{T\log ^{2}n}{n}%
\rightarrow 0$ as $n\rightarrow +\infty ,$ \ we have that  \\ $\sqrt{T}\frac{%
\partial G\left( \widetilde{\sigma },\widetilde{H},\widetilde{\mu }%
_{2}\right) }{\partial \sigma }\left( \widehat{\sigma }-\sigma ^{0}\right) 
\overset{P}{\rightarrow }0$ and $\sqrt{T}\frac{\partial G\left( \widetilde{%
\sigma },\widetilde{H},\widetilde{\mu }_{2}\right) }{\partial H}\left( 
\widehat{H}-H\right) \overset{P}{\rightarrow }0.$
Therefore, the asymptotic distribution of 
$\sqrt{T}\left ( \hat{\lambda}-\lambda^{0}\right )$ is the same as that of

\begin{equation*}
\sqrt{T}\frac{\partial G\left( \widetilde{\sigma },\widetilde{H},\widetilde{\mu }%
_{2}\right) }{\partial \mu _{2}}\left( \widehat{\mu }_{2}-\mu
_{2}^{0}\right) =
\end{equation*}

\begin{equation*}
 \sqrt{T}\frac{\partial G\left( \widetilde{\sigma },\widetilde{H},%
\widetilde{\mu }_{2}\right) }{\partial \mu _{2}}\left( \widehat{\mu }_{2}-%
\frac{1}{T}\int_{0}^{T}X_{t}^{2}dt\right) +\sqrt{T}\frac{\partial G\left( \widetilde{%
\sigma },\widetilde{H},\widetilde{\mu }_{2}\right) }{\partial \mu _{2}}%
\left( \frac{1}{T}\int_{0}^{T}X_{t}^{2}dt-\mu _{2}^{0}\right) .
\end{equation*}
 
\end{proof}


\end{document}